\newtheorem{theorem}{Theorem}[section]
\newtheorem{lemma}[theorem]{Lemma}
\newtheorem{proposition}[theorem]{Proposition}
\newtheorem{definition}[theorem]{Definition}
\theoremstyle{remark}
\newtheorem{example}[theorem]{Example}
\begin{document}

\title{Algebraic discrete Morse theory for the hull resolution}
\author{Patrik Nor\'en}
\address{Department of Mathematics; Box 8205, NC State University; Raleigh, NC 27695-8205, U.S.A.}
\email{pgnoren2@ncsu.edu}

\begin{abstract}
We study how powerful algebraic discrete Morse theory is when applied to hull resolutions. The main result describes all cases when the hull resolution of the edge ideal of the complement of a triangle-free graph can be made minimal using algebraic discrete Morse theory.
\end{abstract}
\maketitle

\section{Introduction}

Finding minimal free resolutions of ideals is an important problem in commutative algebra. Cellular resolutions provide one of the main techniques for obtaining free resolutions of monomial ideals. A particularly nice type of cellular resolution is the hull resolution. The hull resolution preserves all the symmetry of the ideal itself but it is not necessarily minimal. Algebraic discrete Morse theory is a general method for making free resolutions smaller, but often it is not clear how powerful this method is. This paper studies the case when the monomial ideal is the edge ideal of a graph and algebraic discrete Morse theory is applied to the hull resolution.

The main result is the following.

\begin{theorem}\label{thm:main}
Let $\overline{G}$ be a triangle-free graph and let $I_G$ be the edge ideal of $G$. The Hull resolution of $I_G$ can be made minimal using algebraic discrete Morse theory if and only if $\overline{G}$ has no induced subgraph isomorphic to two disjoint cycles.
\end{theorem}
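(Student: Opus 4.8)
The plan is to reformulate the statement as a question about acyclic matchings on the face poset of a polytope, to show that this question decouples over multidegrees, and then to analyze it one multidegree at a time; the triangle-free hypothesis is used both to pin down the shape of the graded Betti numbers and to control the relevant face structure.

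\emph{Reduction to the edge polytope.} First I would identify the hull complex of $I_G$ explicitly. Since every generator $x_ux_v$ of $I_G$ has degree $2$, the points $t^{e_u+e_v}$ (for $uv\in E(G)$) all lie on the affine hyperplane $\{\,x:\sum_k x_k = 2t+n-2\,\}$, which supports the hull polyhedron $\mathrm{conv}\{t^{e_u+e_v}\}+\mathbb R^n_{\ge 0}$; hence the bounded faces of that polyhedron are exactly the faces of $\mathrm{conv}\{t^{e_u+e_v}\}$, which is affinely isomorphic to the edge polytope $P_G=\mathrm{conv}\{e_u+e_v : uv\in E(G)\}$. So the hull resolution of $I_G$ is the cellular resolution supported on the face complex of $P_G$, with a face $F$ labelled by $x^{W(F)}$, where $W(F)$ is the union of the edges indexing the vertices of $F$. (This much holds for every graph $G$.) For the fine structure of the faces of $P_G$ I would use the combinatorial description of faces of edge polytopes due to Ohsugi and Hibi.

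\emph{Decoupling over multidegrees.} In algebraic discrete Morse theory a matched pair must carry a unit coefficient, which for a cellular resolution means the two cells have equal multidegree; moreover multidegree is non-increasing along directed paths in the matching digraph, so every directed cycle is confined to a single multidegree. Consequently an acyclic Morse matching on the hull resolution is precisely an independent choice, for each $T\subseteq V$, of an acyclic matching on $\mathcal C_T:=\{F\ \text{a face of}\ P_G : W(F)=T\}$ (with the $W$-preserving covering relations), and the resulting Morse complex is minimal if and only if, for every $T$, the number of critical cells of $\mathcal C_T$ of dimension $i-1$ equals $\beta_{i,x^T}(S/I_G)$ for all $i$. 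By Hochster's formula $\beta_{i,x^T}(S/I_G)=\dim_k\widetilde H_{|T|-i-1}(\overline G[T];k)$, and since $\overline G[T]$ is a graph this is nonzero only for $i=|T|-1$ (with value one less than the number of connected components of $\overline G[T]$) and $i=|T|-2$ (with value $b_1(\overline G[T])$). Thus the theorem becomes: for every $T$, the poset $\mathcal C_T$ admits an acyclic matching leaving exactly $\widetilde b_0(\overline G[T])$ critical cells in dimension $|T|-2$, exactly $b_1(\overline G[T])$ critical cells in dimension $|T|-3$, and no critical cells in any other dimension.

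\emph{The two directions.} For the ``if'' direction, assume $\overline G$ has no induced pair of disjoint cycles and fix $T$. If $\overline G[T]$ is connected, the target is to collapse $\mathcal C_T$ onto $b_1(\overline G[T])$ critical cells concentrated in a single dimension, which I would achieve by an explicit matching-tree argument using a fixed vertex order. If $\overline G[T]$ is disconnected, the hypothesis guarantees that at most one component of $\overline G[T]$ contains a cycle, and I would build the matching by first pushing all inessential cells downward along the vertex order and then treating the unique cyclic component separately; acyclicity follows from compatibility with the order. For the ``only if'' direction, suppose $\overline G$ has an induced subgraph $C_m\sqcup C_n$ on a vertex set $T$; then $\widetilde b_0(\overline G[T])=1$ and $b_1(\overline G[T])=2$, so we would need an acyclic matching on $\mathcal C_T$ with exactly one critical cell in its top dimension and two in the dimension below. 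The point is that the two disjoint cycles impose two independent ``spherical'' obstructions on $\mathcal C_T$ once its top cell is forced to be critical, and I would show that these cannot both be realized by critical cells without either leaving a further critical cell or introducing a directed cycle in the matching; making this rigorous requires a precise description of which edge subsets of $G[T]$ with vertex-union exactly $T$ span a face of $P_G$, together with an Euler-characteristic/parity count on the relevant intervals of $\mathcal C_T$.

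\emph{The main obstacle.} The technical heart, in both directions, is controlling $\mathcal C_T$: deciding exactly which collections of edges of $G[T]$ with vertex-union $T$ are the vertex sets of faces of the edge polytope $P_G$, and then deciding when the resulting poset admits an acyclic matching with the prescribed critical-cell profile. The face structure of edge polytopes is delicate, and the real work is to show that, after imposing triangle-freeness of $\overline G$, the single configuration that defeats every acyclic matching — across all $T$ — is an induced pair of disjoint cycles.
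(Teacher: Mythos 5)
Your high-level framing is sound and agrees with the paper's: the hull complex of $I_G$ is the edge polytope $P_G$ with the lcm labeling, a homogeneous acyclic matching decomposes into independent matchings on the sets of faces with a fixed label (your $\mathcal C_T$, the paper's $M_{G[T]}$), and the triangle-free hypothesis forces the relevant Betti numbers into two homological degrees. But the proposal stops exactly where the paper's work begins. The entire technical content of the paper is the determination of $\mathcal C_T$: using the Ohsugi--Hibi facet description of edge polytopes, an eleven-case analysis of pairs of fundamental sets shows that for $G[T]$ connected and non-bipartite the poset $M_{G[T]}$ consists of the top cell, facets indexed by fundamental vertices and fundamental edges of $\overline{G}[T]$, and codimension-two faces, with \emph{no} codimension-three faces, and that it is the dualization of the face poset of an auxiliary graph $F(\overline{G}[T],S)$ homotopy equivalent to $\overline{G}[T]$ (Lemma 6.2). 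You correctly identify this as ``the main obstacle'' but do not supply it, and without it neither direction of your argument can be executed; in particular your ``explicit matching-tree argument using a fixed vertex order'' for the if-direction has nothing concrete to act on.

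There is also a substantive problem with your plan for the ``only if'' direction, beyond missing detail. Once $\mathcal C_T$ is known to be (the dual of) the \emph{augmented} face poset of two disjoint cycles --- top cell playing the role of the empty face --- your reformulation asks one to show that no acyclic matching leaves exactly $0$, $1$, $2$ critical cells in the three relevant dimensions. An Euler-characteristic/parity count will not do this: the profile $(0,1,2)$ satisfies the Euler relation and the Morse inequalities, and one can write down an acyclic matching realizing it (match the top cell to a facet in one cycle component, collapse that component onto one remaining critical codimension-two face, and collapse the other component onto one critical facet plus one critical codimension-two face). So the obstruction, if it is to be established, cannot be a counting obstruction on critical cells. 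The paper instead locates the failure in the Batzies--Welker minimality criterion: the surviving critical facet necessarily admits a gradient path to a critical codimension-two face with the same label. Note that this is only a \emph{sufficient} condition for minimality whose failure is being exhibited, which is a strictly weaker conclusion than your criterion ``critical counts equal Betti numbers''; reconciling the two (i.e., showing the Morse differential entry is actually nonzero, not just that a gradient path exists) is a genuine issue you would have to confront, and your proposal as written does not engage with it.
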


An important example is the complement of a cycle. There are a few cellular resolutions of the edge ideal of the complement of a cycle already in the literature. Bierman \cite{B}, Dochterman \cite{D} and Sturgeon \cite{S} give different constructions. As there is some choice in how to apply discrete Morse theory, the exact description of the resolution from the theory in this paper varies. However it is immediate that the cells correspond to components of induced subgraphs of the cycle and each induced proper subgraph gives one cell less than the number of components.

The minimal example where the algebraic discrete Morse theory can not be used to make the hull resolution minimal for complements of triangle-free graphs is the complement of two disjoint cycles. In these cases it is possible to get a cellular resolution with one single cell too many to be minimal.

The outline of the paper is as follows. Section \ref{Sec:Graph} introduces some basic graph theory concepts and notation. In Section \ref{Sec:polytope} edge ideals and the associated edge polytopes that support their hull resolutions are described. In Section \ref{Sec:Cells} the machinery of algebraic discrete Morse theory and cellular resolutions is briefly explained. In Section \ref{Sec:Hull} the hull resolutions for general edge ideals are explored. Section \ref{Sec:Comp} and Section \ref{Sec:All} are devoted to the combinatorics of the hull resolutions of the edge ideals of triangle-free graphs. In Section \ref{Sec:Main} Theorem \ref{thm:main} is proved.

\section{Graph theory}\label{Sec:Graph}

Graphs are finite and have no loops or multiple edges unless explicitly said otherwise.

Some basic notions from graph theory are needed. Let $G$ be a graph, the vertex set of $G$ is denoted $V(G)$ and the edge set of $G$ is denoted $E(G)$. The \emph{degree} of a vertex $v\in V(G)$ is the number of edges containing $v$ in $E(G)$. Let $V_i(G)$ be the set of vertices in $V(G)$ with degree $i$. A \emph{subgraph} of $G$ is a graph $G'$ with $V(G')\subseteq V(G)$ and $E(G')\subseteq E(G)$. A \emph{component} of $G$ is a maximal connected subgraph with nonempty vertex set. A set $U\subseteq V(G)$ is \emph{independent} if there is no edge between any pair of vertices in $U$. A graph is \emph{triangle-free} if it has no subgraph isomorphic to a triangle.

There are a few standard ways to construct new graphs that will be important. The \emph{complement} of a graph $G$ is denoted $\overline{G}$ and it is the graph  with $V(\overline{G})=V(G)$ so that two vertices are adjacent in $\overline{G}$ if and only if they are not adjacent in $G$.

If $U\subseteq V(G)$ then the subgraph of $G$ \emph{induced} by $U$ is denoted $G[U]$, it is the graph with $V(G[U])=U$ and vertices are adjacent in $G[U]$ if and only if they are adjacent in $G$. If $v\in V(G)$ then the graph obtained from $G$ by removing $v$ is $G\setminus v=G[V(G)\setminus\{v\}]$. 

Given two graphs $G_1$ and $G_2$ the graph $G_1\cap G_2$ is defined as the graph with $V(G_1\cap G_2)=V(G_1)\cap V(G_2)$ and $E(G_1\cap G_2)=E(G_1)\cap E(G_2)$, similarly for unions $V(G_1\cup G_2)=V(G_1)\cup V(G_2)$ and $E(G_1\cup G_2)=E(G_1)\cup E(G_2)$.

Stars are important special kinds of graphs.

\begin{definition}
A graph $G$ is a \emph{star with central vertex $v$} if $v\in V(G)$ and $E(G)=\{uv\mid u\in V(G)\setminus\{v\}\}$.
\end{definition}

Given a graph $G$ there are some special important subgraphs that will occur.

\begin{definition}
For every vertex $v\in V(G)$ define graphs $N_G(v)$ by  $V(N(v))=\{v\}\cup\{u\in V(G)\mid uv\in E(G)\}$ and $E(N_G(v))=\{uv\in E(G)\}$. For nonempty independent subsets $U$ of $V(G)$ define $N_G(U)=\cup_{v\in U}N_G(U)$. For edges $uv\in E(\overline{G})$ define $N_G(uv)=N_G(u)\cup N_G(v)$. Also define graphs $C_G(x)=G[V(G)\setminus V(N_G(x))]$ and $NC_G(x)=N_G(x)\cup C_G(x)$.
\end{definition}

Two important properties is that $N_G(v)$ is a star for vertices $v$ and the neighborhood of $v$ is the same in $G$ and $N_G(v)$. In fact this is a simple reformulation of the definition.

Given two different nonempty independent sets $U$ and $U'$  it will be important to know if $NC_G(U)\cap NC_G(U')$ has isolated vertices. In general $NC_G(U)\cap NC_G(U')$ is
\[(N_G(U)\cap N_G(U'))\cup(N_G(U)\cap C_G(U'))\cup(C_G(U)\cap N_G(U'))\cup(C_G(U)\cap C_G(U'))\]
were the unions are disjoint. In Section \ref{Sec:All} there are many propositions where the proofs depend on determining if the graph $NC_G(U)\cap NC_G(U')$ has isolated vertices or not, and the usual argument is to handle each part $(N_G(U)\cap N_G(U'))$, $(N_G(U)\cap C_G(U'))$, $(C_G(U)\cap N_G(U'))$ and $(C_G(U)\cap C_G(U'))$ separately.

Let $G$ be a graph and let $S\subseteq E(G)$ be a subset so that every vertex in $V_1(G)\cup V_2(G)$ is in at least one edge in $S$. There is a graph $F(G,S)$ that will be important in understanding the combinatorics of the hull resolutions of edge ideals.

The construction of $F(G,S)$ is in two steps. Construct the graph $F'(G,S)$ from $G$ by subdividing every edge in $S$. The vertex set of $F'(G,S)$ can be thought of as the union $V(F'(G))=V(G)\cup S$. In this way two elements in $S$ are adjacent if they have a vertex in common. Elements in $V(G)$ are adjacent if they were adjacent in $G$ but the edge between them is not in $S$. Finally a vertex $v$ in $V(G)$ is adjacent to an element $e$ in  $S$ if $v$ is one of the endpoints of $e$.

Construct $F(G,S)$ from $F'(G,S)$ by contracting one edge $ve$ for every $v\in V_1(G)\cup V_2(G)$ where $e\in S$. As the degree of $v$ does not change by subdividing edges the degree in $F'(G,S)$ is still one or two, in particular the combinatorics of the graph $F(G,S)$ do not depend on the choice of edge. An explicit description of the vertex set and adjacency in $F(G,S)$ will be useful. 

The vertex set of $F(G,S)$ is $S\cup V(G)\setminus (V_1(G)\cup V_2(G))$. Two edges in $S$ are adjacent if they have a common endpoint in $V_1(G)\cup V_2(G)$. Two vertices in $V(G)\setminus (V_1(G)\cup V_2(G))$ are adjacent if they are adjacent in $G$ but the edge between them is not in $S$. A vertex $v$ in $V(G)\setminus (V_1(G)\cup V_2(G))$ is adjacent to any edge in $S$ containing $v$. Finally a vertex $u$ in $V(G)\setminus (V_1(G)\cup V_2(G))$  is adjacent to the edge $vw$ in $S$ not containing $u$ if $v\in V_1(G)\cup V_2(G)$ and $uv\in E(G)\setminus S$.

It is immediate from the construction that $G$ and $F(G,S)$ are homotopy equivalent as topological spaces and this property will be important. Later there will be some operations on the topological spaces realized by graphs, for example contracting edges. This is the only situation where loops and multiple edges might occur.

Most graphs considered are undirected. In fact the only directed graphs occuring will be Hasse diagrams of posets, and graphs obtained from Hasse diagrams by reversing some edges. Recall that the vertices of the Hasse diagram of a poset is the elements of the poset and there is an edge from $u$ to $v$ if and only if $u>v$ and there is no element $w$ so that $u>w>v$. The \emph{dualization} of a poset $P$ is the poset whose Hasse diagram is obtained from the Hasse diagram of $P$ by reversing all edges.

\section{Edge ideals and edge polytopes}\label{Sec:polytope}

Let $G$ be a graph. Let $\{\mathbf{e}_v\mid v\in V(G)\}$ be the standard basis of $\mathbb{R}^{V(G)}$. The polytope $P_G$ obtained as the convex hull of $\{\mathbf{e}_i+\mathbf{e}_j\mid ij\in E(G)\}$ is \emph{the edge polytope of $G$.} It is immediate from the definition that the polytopes $P_G$ behave well with respect to the intersection operation on graphs $P_{G_1\cap G_2}=P_{G_1}\cap P_{G_2}$.

In order to give the facet description of $P_G$ some more notation is needed.
\begin{definition}
A vertex $v\in V(G)$ is \emph{ordinary} if the graph $G\setminus v$ is connected.
\end{definition}

\begin{definition}
A vertex $v\in V(G)$ is \emph{regular} if no component of $G\setminus v$ is bipartite.
\end{definition}

\begin{definition}
A nonempty independent set $U$ in a graph $G$ is an \emph{acceptable set in $G$} if both $N_G(U)$ and $C_G(U)$ are connected and $E(C_G(U))\neq \emptyset$.
\end{definition}

\begin{definition}
A nonempty independent set $U$ in a graph $G$ is a \emph{fundamental set in $G$} if $N_G(U)$ is connected and no component of  $C_G(U)$ is bipartite.
\end{definition}

The following two propositions are special cases of Theorem 1.7 in \cite{OH} by Ohsugi and Hibi.

\begin{proposition}\label{Prop:Bipartite}
Let $G$ be a connected bipartite graph with $E(G)\neq \emptyset$. The dimension of $P_G$ is $|V(G)|-2$ and the set of facets of $P_G$ is $\{P_{G\setminus v}\mid v$ is an ordinary vertex in $G\}\cup\{P_{NC_G(U)}\mid U$ is an acceptable set in $G\}$.

If $U\neq U'$ are two acceptable sets in $G$ then $P_{NC_G(U)}\neq P_{NC_G(U')}$.
\end{proposition}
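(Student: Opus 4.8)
The plan is to prove the final assertion — that distinct acceptable sets $U \neq U'$ give distinct facets $P_{NC_G(U)} \neq P_{NC_G(U')}$ — by showing that the acceptable set $U$ can be recovered from the face $P_{NC_G(U)}$, i.e., from the subgraph $NC_G(U) = N_G(U) \cup C_G(U)$. Since $P_{H_1} = P_{H_2}$ forces $H_1$ and $H_2$ to have the same vertices incident to some edge and the same edges (the edge polytope remembers the edge set, as $\mathbf{e}_i + \mathbf{e}_j$ are distinct lattice points for distinct edges $ij$), it suffices to show: if $NC_G(U) = NC_G(U')$ as graphs, then $U = U'$. So the real content is a graph-theoretic reconstruction statement about acceptable sets.

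The key steps, in order. First I would record the structural features of $NC_G(U)$: by definition $N_G(U)$ and $C_G(U)$ are each connected, $E(C_G(U)) \neq \emptyset$, and crucially $V(N_G(U)) \cap V(C_G(U)) = \emptyset$ (a vertex of $C_G(U)$ is in $V(G) \setminus V(N_G(U))$), while $U \subseteq V(N_G(U))$. Moreover there are no edges of $G$ between $V(N_G(U))$ and $V(C_G(U))$: any such edge would attach a neighbor of some $v \in U$ — but then that neighbor lies in $N_G(U)$, contradiction. Hence $N_G(U)$ and $C_G(U)$ are exactly the two "halves" of $NC_G(U)$, but a priori they could be distinguished only by which one contains $U$. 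The second step is to identify $U$ intrinsically inside $N_G(U)$. Here I would use that $N_G(U) = \bigcup_{v\in U} N_G(v)$ is a union of stars centered at the vertices of $U$, that $U$ is independent, and that the neighborhood of each $v \in U$ is the same in $G$ as in $N_G(v)$ (the remark after the star definitions). This should pin down $U$ as, e.g., a minimal set of centers whose stars cover $N_G(U)$, or via a degree/adjacency characterization distinguishing the $U$-side vertices from the rest.

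The third step handles the one genuine ambiguity: even if $NC_G(U) = NC_G(U')$, could $N_G(U)$ coincide with $C_G(U')$ and vice versa? This is where connectivity and the bipartiteness-free content of "acceptable" do not directly help (acceptable sets don't forbid bipartite pieces), so I expect the main obstacle to be ruling out this swap — or more precisely, showing that the decomposition of $NC_G(U)$ into $N_G(U) \sqcup C_G(U)$ plus the location of $U$ is forced. I would argue that $C_G(U)$, being an induced subgraph of $G$ on a vertex set disjoint from all neighbors of $U$, cannot itself be written as $N_G(U')$ for an independent $U'$ unless... and then derive a contradiction with $U \subseteq V(N_G(U))$ being nonempty and independent with its full $G$-neighborhood inside $N_G(U)$; in the swapped scenario the vertices of $U$ would have to sit inside $C_G(U')$, forcing them to have no neighbors among $N_G(U') = C_G(U)$, but they do have neighbors there. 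Once the decomposition and the side containing $U$ are canonical, recovering $U$ from $N_G(U)$ by the star-cover characterization of step two finishes the proof; the remaining verifications are routine bookkeeping about which lattice points $\mathbf e_i + \mathbf e_j$ appear.
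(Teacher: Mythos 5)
There is no internal proof in the paper to compare against: Proposition~\ref{Prop:Bipartite} is quoted as a special case of Theorem~1.7 of Ohsugi--Hibi \cite{OH}, and you only address the final uniqueness clause, not the dimension or the facet description. For that clause your argument has two genuine gaps, and the second one is fatal. First, the claim that ``there are no edges of $G$ between $V(N_G(U))$ and $V(C_G(U))$'' is false: in the path $a$--$b$--$c$--$d$--$e$ with the acceptable set $U=\{a\}$, the edge $bc$ joins $V(N_G(a))=\{a,b\}$ to $V(C_G(a))=\{c,d,e\}$. (What is true, and what you actually need, is that the graph $NC_G(U)$ itself has no edges between the two parts, which holds by definition.) This error propagates: a vertex of $V(C_G(U'))$ is merely a non-neighbor of $U'$, not a non-neighbor of all of $V(N_G(U'))$.

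Second, and decisively, your treatment of the ``swap'' case ($N_G(U)=C_G(U')$ and $C_G(U)=N_G(U')$) does not work, and in fact the swap genuinely occurs. In the path $v_1$--$v_2$--$\cdots$--$v_6$, both $U=\{v_2\}$ and $U'=\{v_5\}$ are acceptable, and $NC_G(U)$ and $NC_G(U')$ are the same graph (edges $v_1v_2, v_2v_3, v_4v_5, v_5v_6$), so $P_{NC_G(U)}=P_{NC_G(U')}$. Your proposed contradiction --- that the vertices of $U$ would have to lie in $C_G(U')$ yet ``have neighbors'' in $N_G(U')=C_G(U)$ --- is exactly backwards: every neighbor of $u\in U$ lies in $V(N_G(U))$, hence \emph{none} lies in $V(C_G(U))=V(N_G(U'))$, so no contradiction arises. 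This shows the uniqueness sentence is false as literally stated (it can presumably be repaired by restricting $U$ to one part of the bipartition, which is closer to what \cite{OH} proves), so no proof along your lines can close this case. I would also flag that your step-two recipe for recovering $U$ from $N_G(U)$ as ``a minimal set of centers whose stars cover'' is not a characterization (in the path $a$--$b$--$c$ one has $N_G(\{b\})=N_G(\{a,c\})$); the non-swap case can instead be handled by a symmetric-difference argument using connectivity of $G$ and $E(C_G(U))\neq\emptyset$, but that does not rescue the statement.
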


\begin{proposition}\label{Prop:NotBipartite}
Let $G$ be a connected not bipartite graph. The dimension of $P_G$ is $|V(G)|-1$ and the set of facets of $P_G$ is $\{P_{G\setminus v}\mid v$ is a regular vertex in $G\}\cup\{P_{NC_G(U)}\mid U$ is a fundamental set in $G\}$.

If $U\neq U'$ are two fundamental sets in $G$ then $P_{NC_G(U)}\neq P_{NC_G(U')}$.
\end{proposition}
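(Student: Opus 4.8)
The plan is to deduce Proposition~\ref{Prop:NotBipartite} from Theorem 1.7 of~\cite{OH}, the content being a translation of Ohsugi and Hibi's list of supporting hyperplanes of $P_G$ into the language of the subgraphs $G\setminus v$ and $NC_G(U)$ introduced above. Write $n=|V(G)|$. Every vertex $\mathbf e_i+\mathbf e_j$ of $P_G$ lies on $H_0=\{\sum_v x_v=2\}$, so $P_G$ is the convex hull of those finitely many points, and each face of $P_G$ is the convex hull of the points lying on a supporting hyperplane. I would first record the dimension: the $\{0,1\}$ incidence matrix of a connected non-bipartite graph has rank $n$, so among the vertices of $P_G$ there are $n$ that are linearly, hence affinely, independent; together with $P_G\subseteq H_0$ this gives $\dim P_G=n-1$. (The analogous rank statement for connected bipartite graphs, where the rank is $n-1$, gives the dimension in Proposition~\ref{Prop:Bipartite}, and the argument below has a parallel version proving that proposition.)

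Theorem 1.7 of~\cite{OH} exhibits the supporting hyperplanes of $P_G$ as the coordinate hyperplanes $\{x_v=0\}$ for appropriate vertices $v$ and the hyperplanes $H_T=\{\sum_{i\in T}x_i=\sum_{j\in N(T)}x_j\}$ for appropriate independent sets $T$, where $N(T)$ is the set of vertices adjacent to $T$ (disjoint from $T$), together with the precise conditions on $v$ and $T$ under which these cut out facets. The next step is the edge dictionary. For $\{x_v=0\}$ the vertices $\mathbf e_i+\mathbf e_j$ lying on it are exactly those with $ij\in E(G\setminus v)$, so the corresponding face of $P_G$ is $P_{G\setminus v}$. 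Evaluating $\sum_{i\in T}x_i-\sum_{j\in N(T)}x_j$ on $\mathbf e_a+\mathbf e_b$ shows the vertices on $H_T$ are exactly those with $ij$ an edge incident to $T$ or an edge of $G[V(G)\setminus V(N_G(T))]$, that is, $ij\in E(N_G(T))\sqcup E(C_G(T))=E(NC_G(T))$, so the corresponding face is $P_{NC_G(T)}$.

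What remains is to see that Ohsugi and Hibi's facet conditions translate to ``$v$ regular'' and ``$T$ fundamental'', and this is again a rank count. The face $P_{G\setminus v}$ lies in $H_0\cap\{x_v=0\}$, so it is a facet exactly when the incidence vectors of $G\setminus v$ have rank $n-1$; since a connected component contributes its full vertex count unless it is bipartite (an isolated vertex counting as a bipartite component), this happens exactly when no component of $G\setminus v$ is bipartite, i.e.\ $v$ is regular. For $H_T$ one uses that $N_G(T)$ is itself bipartite with parts $T$ and $N(T)$, so its incidence vectors have rank $|V(N_G(T))|$ minus the number of its components, while no component of $C_G(T)$ being bipartite makes the $C_G(T)$ contribution full; the total equals $n-1$ exactly when $N_G(T)$ is connected and no component of $C_G(T)$ is bipartite, i.e.\ $T$ is fundamental. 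Conversely every regular vertex and every fundamental set produces one of these hyperplanes, so the two facet lists agree. Finally, a facet lies on a unique supporting hyperplane, so the displayed uniqueness statement reduces to injectivity of $T\mapsto H_T$ on fundamental sets: the normal of $H_T$ inside $H_0$ is the class modulo $(1,\dots,1)$ of the $\{-1,0,1\}$-vector equal to $+1$ on $T$ and $-1$ on $N(T)$, and comparing entries recovers $T$ from this class except in the degenerate case $N_G(T)=V(G)$, which is handled directly.

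I expect the genuine content to be light --- it is the edge dictionary of the second paragraph --- and the main nuisance to be the bookkeeping that aligns Ohsugi and Hibi's hypotheses precisely with the definitions of \emph{regular} and \emph{fundamental}: one has to be careful about small graphs, about independent sets $T$ with $N_G(T)=V(G)$ (where $C_G(T)$ is empty, the face is a star, and several conditions hold vacuously), and about isolated vertices created inside $G\setminus v$.
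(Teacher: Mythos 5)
The paper gives no proof of this proposition beyond the remark that it is a special case of Theorem 1.7 of \cite{OH}, so your derivation takes the same route, merely spelling out the translation the paper leaves implicit. Your dictionary between the hyperplanes $\{x_v=0\}$, $H_T$ and the subgraphs $G\setminus v$, $NC_G(T)$, the rank counts identifying the facet conditions with \emph{regular} and \emph{fundamental}, and the normal-vector argument for injectivity are all correct.
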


Induced subgraphs give faces in $P_G$.

\begin{proposition}\label{Prop:Face}
Let $G$ be a graph and let $U$ be a nonempty subset of $V(G)$. Let $H$ be the linear subspace of $\mathbb{R}^{V(G)}$ spanned by $\{\mathbf{e}_v\mid v\in U\}$. The polytope $P_{G[U]}$ is a face of $P_G$ and $P_{G[U]}=H\cap P_G$.
\end{proposition}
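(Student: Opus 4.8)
The plan is to realize $P_{G[U]}$ explicitly as the face of $P_G$ on which a single linear functional is maximized, and then to identify that face with $H\cap P_G$. Define the linear functional $\ell\colon\mathbb{R}^{V(G)}\to\mathbb{R}$ by $\ell(x)=\sum_{v\in U}x_v$. For an edge $ij\in E(G)$ one has $\ell(\mathbf{e}_i+\mathbf{e}_j)=|\{i,j\}\cap U|\in\{0,1,2\}$, and this value equals $2$ precisely when $ij\in E(G[U])$. Since $P_G$ is the convex hull of the points $\mathbf{e}_i+\mathbf{e}_j$ with $ij\in E(G)$, the functional $\ell$ is bounded above by $2$ on $P_G$. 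Assuming for the moment $E(G[U])\neq\emptyset$, this bound is attained, so $F:=\{x\in P_G:\ell(x)=2\}$ is a nonempty face of $P_G$.

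Next I would establish the chain of inclusions $P_{G[U]}\subseteq F\subseteq H\cap P_G\subseteq P_{G[U]}$. The first inclusion is immediate: every generator $\mathbf{e}_i+\mathbf{e}_j$ of $P_{G[U]}$ lies in $P_G$ and has $\ell$-value $2$, and $F$ is convex. For the second inclusion, take $x\in F$ and write $x=\sum_{ij\in E(G)}\lambda_{ij}(\mathbf{e}_i+\mathbf{e}_j)$ with $\lambda_{ij}\geq 0$ and $\sum\lambda_{ij}=1$; then $2=\ell(x)=\sum_{ij}\lambda_{ij}\,|\{i,j\}\cap U|\leq 2\sum_{ij}\lambda_{ij}=2$ forces $\lambda_{ij}=0$ for every edge $ij$ having an endpoint outside $U$, so for each $v\notin U$ we get $x_v=\sum_{ij\ni v}\lambda_{ij}=0$, i.e. $x\in H$. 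The third inclusion is the same bookkeeping in reverse: if $x\in H\cap P_G$ then $x_v=0$ for all $v\notin U$ forces $\lambda_{ij}=0$ whenever $ij\notin E(G[U])$, so $x$ is a convex combination of the generators of $P_{G[U]}$. Hence all three sets coincide, and $P_{G[U]}=F=H\cap P_G$ is a face of $P_G$.

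The degenerate case $E(G[U])=\emptyset$ deserves a separate line: then $P_{G[U]}$ is the empty polytope, which is a face of $P_G$, and the third inclusion above still applies and gives $H\cap P_G\subseteq P_{G[U]}=\emptyset$, so $H\cap P_G=P_{G[U]}$ in this case as well.

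The argument is essentially manipulation of convex combinations, so I do not expect a genuine obstacle; the one step that needs a little care is the middle inclusion, where maximizing $\ell$ must be seen to not merely select which generators $\mathbf{e}_i+\mathbf{e}_j$ can carry positive weight but thereby force the coordinates indexed outside $U$ to vanish — this is exactly what bridges the ``face cut out by $\ell$'' description and the ``intersection with the coordinate subspace $H$'' description. (One could instead remark that the points $\mathbf{e}_i+\mathbf{e}_j$, $ij\in E(G)$, are precisely the vertices of $P_G$ and describe $F$ as the convex hull of those vertices it contains, but the direct computation above avoids having to verify vertex-minimality as a separate fact.)
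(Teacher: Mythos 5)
Your proof is correct and follows essentially the same route as the paper's: both arguments realize $H\cap P_G$ as a face cut out by the supporting hyperplanes $x_v=0$ for $v\notin U$ (you package these into the single functional $\sum_{v\in U}x_v$ maximized at $2$, the paper invokes $P_G\subseteq[0,1]^{V(G)}$ directly) and then identify that face with $P_{G[U]}$ by determining which generators $\mathbf{e}_i+\mathbf{e}_j$ can appear. Your version is simply more explicit, carrying out the convex-combination bookkeeping that the paper leaves to the standard fact that a face is the convex hull of the vertices of the polytope it contains.
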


\begin{proof}
The vertices of $P_G$ in $H$ are exactly the points $\mathbf{e}_i+\mathbf{e}_j$ so that $ij\in E(G)$ and $\{i,j\}\subseteq U$. The intersection is a face as $P_G$ is contained in $[0,1]^{V(G)}$.
\end{proof}

Let $\mathbb{K}$ be a field. The ideal $I_G=\langle x_ix_j\mid ij\in E(G)\rangle\subseteq\mathbb{K}[x_v\mid v\in V(G)]$ is \emph{the edge ideal of $G$.} Define a map $\ell_G$ from the set of nonempty faces of $P_G$ to the monic monomials in $I_G$ by $\ell_G(\sigma)=$lcm$(x_ix_j\mid \mathbf{e}_i+\mathbf{e}_j\in \sigma)$. Sometimes it is useful to extend the domain of $\ell_G$ to include the empty set and then $\ell_G(\emptyset)=1$, in this case the range of $\ell_G$ is also extended. Let $M_G$ be the subposet of the face poset of $P_G$ consisting of all faces $\sigma$ with $\ell_G(\sigma)=\prod_{v\in V(G)}x_v$.

If $G$ has no edges the definitions are a bit degenerate, the conventions $P_G=\emptyset$, $I_G=\langle 1\rangle$, $\ell_G(\emptyset)=1$ and $M_G=\{\emptyset\}$ are sometimes used if $E(G)=\emptyset$.

\section{Discrete Morse theory and cellular resolutions}\label{Sec:Cells}

The machinery of cellular resolutions is a powerful tool used to construct free resolutions of monomial ideals. A cellular resolution of a monomial ideal $I$ is encoded by a cell complex $X$ and a labeling map $\ell$ from the set of cells of $X$ to $I$, the map $\ell$ have to satisfy $\ell(\sigma)=$lcm$(\ell(v)\mid v$ is a vertex of $\sigma)$. There is an easy condition for when a pair $X$ and $\ell$ gives a cellular resolution of $I$, the condition is that the image of $\ell$ generates $I$ and the subcomplex consisting of all cells with labels dividing a monomial $m$ is acyclic for all $m$. The condition for when cellular resolutions is minimal is that the resolution is minimal if and only if no cell is on the boundary of a cell with the same label.

It is always possible to construct a cellular resolution for a given monomial ideal, one construction is the hull resolution by Bayer and Sturmfels \cite{BS}. In the special case when $I$ is the edge ideal of a graph $G$ then the cell complex in the hull resolution is $P_G$ and the labeling map is $\ell_G$. In general it is not possible to give a minimal cellular resolution but algebraic discrete Morse theory can be used to make many cellular resolutions smaller.

The discrete Morse theory developed by Forman \cite{F} provides a way to reduce the number of cells in a CW-complex without changing the homotopy type.

There are  a few different ways to express discrete Morse theory, the way that works best for the algebraic setting is in terms of acyclic matchings in the Hasse diagram of the face poset of  the complex. Let $D$ be a directed graph. A subset $M\subseteq E(D)$ is a \emph{matching} if every vertex is in at most one of the edges in $M$. A matching is \emph{acyclic} if the graph obtained by reversing the edges in the matching contain no directed cycles. An important property of Hasse diagrams of a posets is that they contain no directed cycles. Given an acyclic matching $M$ of $D$ the elements of $V(D)$ that are not matched are \emph{critical}.

The main theorem of discrete Morse theory \cite{F} can be stated as follows.

\begin{theorem}
Let $X$ be a regular CW-complex with face poset $P$. If $M$ is an acyclic matching of $P$ where the empty face is critical, then there is a CW-complex $\tilde{X}$ homotopy equivalent to $X$. The critical cells are in bijection with the cells of $\tilde{X}$, this bijection preserve dimension.
\end{theorem}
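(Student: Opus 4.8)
This is the main theorem of Forman's discrete Morse theory, so the plan is to recover Forman's argument \cite{F}, organized around the acyclic matching $M$. The first step is to pass from $M$ to a discrete Morse function: one builds a real-valued function $f$ on the set of all cells of $X$ such that for every cell $\sigma$ at most one cell covering $\sigma$ in $P$ has $f$-value $\le f(\sigma)$ and at most one cell covered by $\sigma$ has $f$-value $\ge f(\sigma)$, and the cells where neither happens are exactly the critical cells of $M$; the matched pairs are recovered as the covering pairs $\sigma\lessdot\tau$ with $f(\sigma)\ge f(\tau)$. The existence of such an $f$ is equivalent to acyclicity of $M$. After a generic perturbation one may take $f$ injective, and since the empty face is assumed critical it may be given the smallest value, so that the sublevel complexes below start from the empty complex; this hypothesis on $\emptyset$ is precisely what prevents a vertex from being cancelled against the empty face and so keeps the final count of cells correct.

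The second step is to study the sublevel complexes $X(c)=\bigcup_{f(\sigma)\le c}\overline{\sigma}$ as $c$ increases through the values of $f$. Two cases arise. When one passes from one sublevel complex to the next across only matched cells of $M$, one shows that the larger complex deformation retracts onto the smaller, so the homotopy type is unchanged; this is the step that uses acyclicity, and it is carried out by a secondary induction in which one repeatedly removes a matched pair $\sigma\lessdot\tau$ whose lower cell $\sigma$ has become a free face of the current complex. When the cell being crossed is a critical cell of dimension $p$, one shows instead that $X(c)$ is homotopy equivalent to the previous sublevel complex with a single $p$-cell attached along its boundary. Running this from the empty complex up to $X(\infty)=X$ produces a CW-complex $\tilde{X}\simeq X$ assembled with exactly one $p$-cell for each critical $p$-cell, and the resulting correspondence between the cells of $\tilde{X}$ and the critical cells is, by construction, a dimension-preserving bijection.

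I expect the cancellation step — passing a block of matched cells without changing homotopy type — to be the crux, since this is where both regularity of $X$ and acyclicity of $M$ enter essentially: one must show that the matched cells with $f$-values in a critical-free interval can be pushed past one another and deleted, and acyclicity of $M$ is exactly what guarantees that at each stage a matched pair with a free lower cell is available (a directed cycle in the Hasse diagram with matched edges reversed would obstruct precisely this). A smaller point needing care is that the attaching map of the new cell in the critical case can be taken cellular and of the right dimension, which is where regularity of the CW structure is used. All of this is done in \cite{F}, so in practice it suffices to cite that reference; the above is how one would reconstruct the proof.
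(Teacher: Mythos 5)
Your proposal is a correct reconstruction of Forman's argument via sublevel complexes and the matching--Morse-function correspondence, and it reaches the same conclusion the paper does by the same route: the paper states this theorem as a known result and simply cites Forman \cite{F} rather than proving it. Citing \cite{F}, as you note at the end, is exactly what is done here.
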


For one-dimensional complexes the theory is greatly simplified and it is always possible to find optimal matchings in the sense that the resulting complex have the minimal number of cells of any complex homotopy equivalent to the original complex. One-dimensional complexes are essentially graphs where loops and multiple edges are allowed, the complexes obtained from discrete Morse theory are the complexes obtained by contracting non-loop edges. The matchings are pairings of a vertex with an edge containing the vertex, and the matched edge is then contracted and the new vertex is identified with the endpoint of the contracted edge not paired to the contracted edge. In particular it is possible to contract edges in a graph until there is only a single vertex in each component and there is a matching realizing this. The space of acyclic matchings for the Hasse diagram of posets of one-dimensional complexes has interesting structure and was further studied by Chari and Joswig \cite{CJ}.

Batzies and Welker \cite{BW} extended discrete Morse theory to work well with cellular resolutions.

Let $X$ be a CW-complex with labeling map $\ell$ and face poset $P$. An acyclic matching $M$ of the Hasse diagram of $P$ satisfying $\sigma\tau\in M\Rightarrow \ell(\sigma)=\ell(\tau)$ is \emph{homogenous}, that is the matching is homogenous if cells are only matched to cells with the same label.

The main theorem of algebraic discrete Morse theory for cellular resolutions \cite{BW} can be stated as follows.

\begin{theorem}\label{thm:adm}
Let $X$ be a regular CW-complex with face poset $P$. Let $\ell$ be a labeling of $X$ giving a cellular resolution of the ideal $I$. If $M$ is a homogenous acyclic matching of $P$ then $\tilde{X}$ also supports a cellular resolution of $I$. The cell corresponding to the critical cell $\sigma$ has label $\ell(\sigma)$.
\end{theorem}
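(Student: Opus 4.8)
The plan is to deduce this from Forman's theorem (stated just above) by showing that a homogeneous acyclic matching restricts compatibly to every subcomplex of cells whose label divides a fixed monomial. Forman's theorem already hands us a regular CW-complex $\tilde{X}$ with cells in dimension-preserving bijection with the critical cells of $M$; write $\bar\sigma$ for the cell of $\tilde{X}$ corresponding to a critical cell $\sigma$ and define the labeling of $\tilde{X}$ by $\tilde\ell(\bar\sigma)=\ell(\sigma)$, which is forced by the statement. What then remains is to verify that $(\tilde{X},\tilde\ell)$ is actually a cellular resolution of $I$: that $\tilde\ell$ is a genuine labeling (monotone under inclusion, and equal on each cell to the lcm of the labels of its vertices), that its image generates $I$, and — the real point — that the subcomplex $\tilde{X}_{\le m}$ of cells with label dividing $m$ is acyclic for every monomial $m$.

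For the acyclicity, the first step I would take is to fix $m$ and restrict $M$ to the subcomplex $X_{\le m}$ of cells of $X$ with label dividing $m$, which is itself a regular CW-complex and is acyclic because $(X,\ell)$ is a cellular resolution. Here homogeneity does all the work: since $\sigma\tau\in M$ forces $\ell(\sigma)=\ell(\tau)$, a cell lies in $X_{\le m}$ if and only if its $M$-partner does, so $M_m:=\{\sigma\tau\in M\mid\sigma,\tau\in X_{\le m}\}$ is a matching of the Hasse diagram of the face poset of $X_{\le m}$, and a cell of $X_{\le m}$ is $M_m$-critical exactly when it is $M$-critical. This $M_m$ is acyclic because reversing its edges in the smaller Hasse diagram produces a sub-digraph of the acyclic digraph one gets by reversing $M$ in the Hasse diagram of $X$, and a sub-digraph of an acyclic digraph has no directed cycles. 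Applying Forman's theorem to $X_{\le m}$ with $M_m$ then yields a CW-complex homotopy equivalent to $X_{\le m}$, hence acyclic, whose cells correspond to the $M$-critical cells $\sigma$ with $\ell(\sigma)\mid m$, i.e. exactly the cells of $\tilde{X}_{\le m}$.

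The next step is to argue that this Morse complex of $X_{\le m}$ is not merely homotopy equivalent to, but literally equal to, the subcomplex $\tilde{X}_{\le m}$ of $\tilde{X}$. For this I would use that the CW-structure produced by Forman's construction — incidence numbers and attaching maps — is read off from gradient paths, and that every gradient path joining two cells of $X_{\le m}$ stays inside $X_{\le m}$. This last fact follows because the label is non-increasing along a gradient path: a down-step $\rho\gtrdot\rho'$ satisfies $\ell(\rho')\mid\ell(\rho)$, and a matched up-step preserves the label by homogeneity. Hence the attaching data of $\tilde{X}$ restricted to cells with label dividing $m$ coincides with that of the Morse complex of $X_{\le m}$, and $\tilde{X}_{\le m}$ is acyclic. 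The same monotonicity of labels along gradient paths gives $\tilde\ell(\bar\rho)\mid\tilde\ell(\bar\sigma)$ whenever $\bar\rho\subseteq\bar\sigma$, so $\tilde\ell$ is monotone; and for the generation of $I$ together with the lcm identity I would note that in the case that matters here — the hull resolution of the edge ideal of a graph — the $0$-cells carry the distinct labels $x_ix_j$, each a minimal generator of $I$, and every $1$-cell has strictly larger label, so no vertex is matched, all minimal generators survive as critical $0$-cells, and these bookkeeping points become immediate.

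The step I expect to be the only genuine obstacle is the compatibility in the third paragraph: checking that Forman's reduction commutes with the filtration of $X$ by the subcomplexes $X_{\le m}$, so that the Morse CW-structure on $\tilde{X}_{\le m}$ is exactly the subcomplex of the Morse CW-structure on $\tilde{X}$. Everything else — restricting the matching, preservation of acyclicity under the reduction, monotonicity of $\tilde\ell$ — is formal once homogeneity is invoked; the substance is purely the interaction of gradient paths with divisibility of labels, and the key technical lemma to isolate is that labels are non-increasing along gradient paths of a homogeneous matching.
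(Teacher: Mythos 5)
The paper does not actually prove Theorem \ref{thm:adm}: it is quoted as a known result of Batzies and Welker \cite{BW}, so there is no internal proof to compare your attempt against. That said, your sketch follows the same strategy as the cited source. Restricting the homogeneous matching to each subcomplex $X_{\le m}$, noting that homogeneity makes the restriction a matching of $X_{\le m}$ with the same critical cells and that acyclicity passes to sub-digraphs, and then invoking compatibility of the Morse construction with the label filtration is exactly how \cite{BW} proceeds, and the key lemma you isolate --- labels are non-increasing along gradient paths, so gradient paths between cells of $X_{\le m}$ stay in $X_{\le m}$ --- is precisely the point their argument turns on. Two caveats. First, your verification of the labeling axioms and of the generation of $I$ is carried out only for the hull resolution of an edge ideal, whereas the theorem is stated for an arbitrary labeled complex; in general $\tilde{X}$ need not be regular and the identity ``label equals lcm of vertex labels'' need not hold on the nose, which is why \cite{BW} works with a relaxed $\mathbb{Z}^n$-graded notion of CW-complex in which one only requires divisibility of labels along the closure relation. (Generation of $I$ is in fact automatic in general: for a minimal generator $m$ the subcomplex $\tilde{X}_{\le m}$ is acyclic, hence nonempty, so some critical $0$-cell has label dividing, and therefore equal to, $m$.) Second, the compatibility step in your third paragraph is asserted rather than proved; it is correct, but it carries essentially all of the technical content of the theorem, so a self-contained write-up would need the explicit description of the attaching maps of the Morse complex via gradient paths. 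As a reading of why the theorem is true, and of what one would have to check to prove it, your proposal is sound.
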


\section{Hull resolutions of edge ideals}\label{Sec:Hull}

A first step to understand the hull resolution of $I_G$ is to understand the set of cells with a given label.

\begin{proposition}\label{Prop:Isolated}
If $G$ is a graph with $E(G)\neq\emptyset$ then $\ell_G(P_G)=\prod_{v\in V(G)}x_v$ if and only if $G$ has no isolated vertex. 
\end{proposition}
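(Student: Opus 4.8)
The claim is an "if and only if" about when the label of the full polytope $P_G$ is the product of all the variables. Recall that $\ell_G(P_G) = \operatorname{lcm}(x_ix_j \mid ij \in E(G))$, which is exactly $\prod_{v} x_v$ raised to the appropriate powers—but all monomials $x_ix_j$ are squarefree, so the lcm is $\prod_{v \in W} x_v$ where $W$ is the set of vertices appearing in at least one edge. So the statement reduces to the trivial-sounding combinatorial fact that $W = V(G)$ if and only if $G$ has no isolated vertex. Let me sketch how I would organize this.

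First I would unwind the definition of $\ell_G$ on a face $\sigma$: by definition $\ell_G(\sigma) = \operatorname{lcm}(x_ix_j \mid \mathbf{e}_i + \mathbf{e}_j \in \sigma)$. Applied to $\sigma = P_G$ itself, the vertices of $P_G$ are precisely the points $\mathbf{e}_i + \mathbf{e}_j$ for $ij \in E(G)$ (this is the definition of the edge polytope, and these points are genuinely vertices since $P_G \subseteq [0,1]^{V(G)}$ puts them all on extreme points of the cube; alternatively one can just note $P_G$ is the convex hull of this finite set so its vertex set is a subset). Hence $\ell_G(P_G) = \operatorname{lcm}_{ij \in E(G)} x_i x_j$. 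Since each $x_ix_j$ is squarefree, this lcm equals $\prod_{v \in W} x_v$ where $W = \bigcup_{ij \in E(G)} \{i,j\}$ is the set of non-isolated vertices of $G$.

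Then the two directions are immediate. If $G$ has no isolated vertex, then every $v \in V(G)$ lies in some edge, so $W = V(G)$ and $\ell_G(P_G) = \prod_{v \in V(G)} x_v$. Conversely, if $G$ has an isolated vertex $v_0$, then $x_{v_0}$ divides no generator $x_ix_j$ (as $v_0 \notin \{i,j\}$ for any edge $ij$), so $x_{v_0} \nmid \ell_G(P_G)$; hence $\ell_G(P_G) \neq \prod_{v \in V(G)} x_v$. The hypothesis $E(G) \neq \emptyset$ just guarantees that $P_G$ is nonempty and $\ell_G(P_G)$ is a well-defined (nontrivial) monomial, so the statement is not vacuous.

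There is really no hard step here; the only thing requiring a sentence of care is the identification of the vertex set of $P_G$ with $\{\mathbf{e}_i + \mathbf{e}_j : ij \in E(G)\}$, and even that follows formally from $P_G \subseteq [0,1]^{V(G)}$ together with the fact that $P_G$ is the convex hull of those points. This proposition is essentially a bookkeeping lemma that pins down the poset $M_G$—the faces carrying the top label—as being empty exactly when $G$ has an isolated vertex, which is why it is recorded separately before the more substantial analysis of the hull resolution begins.
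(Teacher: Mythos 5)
Your proposal is correct and follows essentially the same route as the paper's proof: both directions come from unwinding $\ell_G(P_G)=\operatorname{lcm}(x_ix_j\mid ij\in E(G))$ and observing that $x_v$ divides this lcm exactly when $v$ lies in some edge. The extra care you take identifying the vertex set of $P_G$ is fine but not needed, since the definition of $\ell_G$ already ranges over all points $\mathbf{e}_i+\mathbf{e}_j$ in the face.
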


\begin{proof}
If $G$ has an isolated vertex $v$ then $x_v$ does not divide any of the generators of $I_G$ and then $x_v$ does not divide any monomial in the image of $\ell_G$. If $G$ has no isolated vertex then for every vertex $v\in V(G)$ there is some edge $uv\in E(G)$, in particular $x_vx_u$ divides $\ell_G(P_G)$.
\end{proof}

It is possible to describe the image of $\ell_G$.

\begin{proposition}\label{Prop:Image}
Let $G$ be a graph and let $U$ be a nonempty subset of $V(G)$. The monomial $\prod_{v\in U}x_v$ is in the image of $\ell_G$ if and only if $G[U]$ has no isolated vertex. Furthermore if $G[U]$ has no isolated vertex then $P_{G[U]}$ has label $\prod_{v\in U}x_v$ and all other faces of $P_G$ with this label are contained in $P_{G[U]}$.
\end{proposition}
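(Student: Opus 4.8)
The plan is to reduce everything to Proposition~\ref{Prop:Isolated} applied to the induced subgraph $G[U]$, using the fact that $P_{G[U]}$ is a face of $P_G$ (Proposition~\ref{Prop:Face}) and that on faces of $P_{G[U]}$ the labelings $\ell_G$ and $\ell_{G[U]}$ coincide. The one preliminary observation I would record is elementary: for a face $\sigma$ of $P_G$, a variable $x_v$ divides $\ell_G(\sigma)$ if and only if some point $\mathbf{e}_i+\mathbf{e}_j$ with $ij\in E(G)$ and $v\in\{i,j\}$ lies in $\sigma$, directly from the definition of lcm.

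First I would prove the containment statement, which simultaneously yields the forward direction of the equivalence. Let $\tau$ be a face of $P_G$ with $\ell_G(\tau)=\prod_{v\in U}x_v$. Every vertex of $\tau$ is a vertex of $P_G$, hence one of the points $\mathbf{e}_i+\mathbf{e}_j$ with $ij\in E(G)$ (a vertex of a polytope presented as a convex hull lies in the spanning set). Each such vertex contributes $x_ix_j$ to the squarefree monomial $\prod_{v\in U}x_v$, forcing $i,j\in U$, so $\mathbf{e}_i+\mathbf{e}_j\in P_{G[U]}$. Since $\tau$ is the convex hull of its vertices, $\tau\subseteq P_{G[U]}$; this proves the final assertion of the proposition. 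If moreover $\prod_{v\in U}x_v$ lies in the image of $\ell_G$, pick such a $\tau$; by the preliminary observation, for each $v\in U$ there is an edge $ij\in E(G)$ with $v\in\{i,j\}$ and $\mathbf{e}_i+\mathbf{e}_j\in\tau$, and we just saw $i,j\in U$, so $v$ has a neighbor in $G[U]$. Thus $G[U]$ has no isolated vertex.

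For the converse, assume $G[U]$ has no isolated vertex. Then nonemptiness of $U$ forces $E(G[U])\neq\emptyset$, so Proposition~\ref{Prop:Isolated} applied to $G[U]$ gives $\ell_{G[U]}(P_{G[U]})=\prod_{v\in U}x_v$. By Proposition~\ref{Prop:Face}, $P_{G[U]}=H\cap P_G$ is a face of $P_G$, and any point $\mathbf{e}_i+\mathbf{e}_j$ contained in it has $i,j\in U$, so $ij\in E(G)$ if and only if $ij\in E(G[U])$; hence $\ell_G$ agrees with $\ell_{G[U]}$ on faces of $P_{G[U]}$. In particular $\ell_G(P_{G[U]})=\prod_{v\in U}x_v$, so the monomial is in the image of $\ell_G$ and $P_{G[U]}$ carries it, completing both the equivalence and the "furthermore" clause. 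I do not anticipate a real obstacle here; the only point needing care is that the whole face $\tau$ — not just the lattice points $\mathbf{e}_i+\mathbf{e}_j$ inside it — lands in $P_{G[U]}$, which is why the argument passes through "vertices of $\tau$ are vertices of $P_G$" and "a face is the convex hull of its vertices" rather than reasoning pointwise.
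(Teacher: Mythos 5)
Your proof is correct and follows the same route as the paper: the containment comes from observing that a face with label $\prod_{v\in U}x_v$ lies in the coordinate subspace $H$ of Proposition~\ref{Prop:Face}, hence in $P_{G[U]}=H\cap P_G$, and the equivalence then reduces to Proposition~\ref{Prop:Isolated} applied to $G[U]$. You simply spell out in more detail the vertex-by-vertex justification that the paper leaves implicit.
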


\begin{proof}
Any face with label $\prod_{v\in U}x_v$ has to be contained in the subspace $H$ in Proposition \ref{Prop:Face}, this proves that any face with the desired label has to be contained in $P_{G[U]}=H\cap P_G$. Now $\ell_G(P_{G[U]})=\ell_{G[U]}(P_{G[U]})=\prod_{v\in U}x_v$ if and only if $G[U]$ has no isolated vertex by Proposition \ref{Prop:Isolated}.
\end{proof}

One useful aspect of Proposition \ref{Prop:Image} is that it makes it possible to think of the set of faces with label $\prod_{v\in U}x_v$ as the set of faces with the maximal label for some hopefully smaller graph.

When $G$ is disconnected then the behavior of the label can be understood in terms of the components.

\begin{proposition}\label{Prop:Disconnect}
Let $G$ be the disjoint union of the connected graphs $G_1,\ldots,G_n$ and let each $G_i$ have at least one edge. The polytope $P_G$ is a realisation of the join $*_{i\in [n]} P_{G_i}$ where the label satisfies $\ell_G(*_{i\in [n]} \sigma_i)=\prod_{i\in[n]}\ell_{G_i}(\sigma_i)$ with $\ell_{G_i}(\emptyset)=1$.
\end{proposition}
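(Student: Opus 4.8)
The plan is to realise $P_G$ directly as a geometric join of the $P_{G_i}$ placed in affinely independent position inside $\mathbb{R}^{V(G)}$, and then to read off the label formula from the standard description of the faces of a join. To set up coordinates: since $G$ is the disjoint union of $G_1,\dots,G_n$, the vertex set $V(G)$ is the disjoint union of the $V(G_i)$, so $\mathbb{R}^{V(G)}$ is the direct sum of the coordinate subspaces $\mathbb{R}^{V(G_i)}$. Every generator $\mathbf{e}_k+\mathbf{e}_l$ of $P_G$ comes from an edge $kl$ of a single component $G_i$, hence lies in the subspace $\mathbb{R}^{V(G_i)}$, and the convex hull of the generators coming from $G_i$ is precisely the copy of $P_{G_i}$ sitting there. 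Thus $P_G=\operatorname{conv}\bigl(P_{G_1}\cup\dots\cup P_{G_n}\bigr)$, and each $P_{G_i}$ is nonempty since $E(G_i)\neq\emptyset$.

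The key step is to show that the affine hulls $\operatorname{aff}(P_{G_i})$ are affinely independent, i.e.\ that $\dim\operatorname{aff}\bigl(P_{G_1}\cup\dots\cup P_{G_n}\bigr)=\sum_{i\in[n]}\dim P_{G_i}+(n-1)$, which is the largest value possible for $n$ polytopes of these dimensions and so forces the pieces into general position. For this I would use the linear functionals $\sigma_i(x)=\sum_{v\in V(G_i)}x_v$: evaluating on the vertices $\mathbf{e}_k+\mathbf{e}_l$ shows $\sigma_i\equiv 2$ on $P_{G_i}$ and $\sigma_j\equiv 0$ on $P_{G_i}$ for $j\neq i$ (here the hypothesis $E(G_i)\neq\emptyset$ is used). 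Translating by a chosen point $p_1\in P_{G_1}$, the direction space of $\operatorname{aff}(P_G)$ is spanned by the direction spaces $\vec P_{G_i}$ together with the $n-1$ vectors $p_i-p_1$ ($p_i\in P_{G_i}$); the $\vec P_{G_i}$ have pairwise disjoint coordinate supports, hence are in direct sum and contribute $\sum_i\dim P_{G_i}$, and the vectors $p_i-p_1$ are linearly independent of that sum and of each other because $\sigma_j$ takes value $2$ on $p_j-p_1$ while vanishing on every $\vec P_{G_k}$ and on $p_i-p_1$ for $i\neq j$. With affine independence in hand, the standard fact that the convex hull of polytopes in affinely independent position is (affinely isomorphic to) their join — with the join structure recovered from the separating functionals $\sigma_i$ — gives that $P_G$ is a realisation of $*_{i\in[n]}P_{G_i}$.

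Finally I would transport the labelling. The nonempty faces of $*_{i\in[n]}P_{G_i}$ are exactly the joins $*_{i\in[n]}\sigma_i$ with each $\sigma_i$ a (possibly empty) face of $P_{G_i}$, not all empty, and the vertex set of $*_{i\in[n]}\sigma_i$ is the disjoint union of the vertex sets of the $\sigma_i$. Hence
\[\ell_G\bigl(*_{i\in[n]}\sigma_i\bigr)=\operatorname{lcm}\bigl(x_kx_l\mid \mathbf{e}_k+\mathbf{e}_l\text{ a vertex of some }\sigma_i\bigr)=\operatorname{lcm}_{i\in[n]}\ell_{G_i}(\sigma_i),\]
with the convention $\ell_{G_i}(\emptyset)=1$; and since $\ell_{G_i}(\sigma_i)$ is a monomial in the variables indexed by $V(G_i)$ and these variable sets are pairwise disjoint, this lcm equals the product $\prod_{i\in[n]}\ell_{G_i}(\sigma_i)$, which is the claimed formula.

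The main obstacle is the affine-independence computation of the second paragraph; everything after the identification $P_G\cong *_{i\in[n]}P_{G_i}$ is bookkeeping, the only slightly delicate point being the precise bridge ``affinely independent placement $\Rightarrow$ convex hull is the join'', including the correct handling of the empty face and of faces $*_{i\in[n]}\sigma_i$ in which some but not all of the $\sigma_i$ are empty — and the functionals $\sigma_i$ are exactly what make this bridge routine.
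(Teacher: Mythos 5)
Your proposal is correct and follows essentially the same route as the paper: identify $P_G$ as the convex hull of the copies of $P_{G_i}$ sitting in independent position inside $\mathbb{R}^{V(G)}$, conclude it is the join, and derive the label formula from the disjointness of the variable supports. The only difference is that you carefully justify the affine independence via the functionals $\sum_{v\in V(G_i)}x_v$, where the paper simply asserts that the $P_{G_i}$ lie in mutually orthogonal, nonintersecting affine subspaces; your added detail is a correct filling-in rather than a different argument.
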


\begin{proof}
The polytopes $P_{G_i}$ are contained in mutually orthogonal and nonintersecting affine subspaces of $\mathbb{R}^{V(G)}$, furthermore $P_G$ is the convex hull of the union of faces $\cup_{i\in [n]}P_{G_i}$ and then $P_G$ is the indicated join.

The formula for the label is true by definition for the vertices of $P_G$ and the general case follows as no variable that divides $\ell_{G_i}(\sigma_i)$ can divide $\ell_{G_j}(\sigma_j)$ for $i\neq j$.
\end{proof}

Now the posets of faces with a given label can be described.

\begin{theorem}\label{Prop:Describe}
Let $G$ be a graph and let $U$ be a nonempty subset of $V(G)$ so that $G[U]$ has no isolated vertex. If $G[U]$ is the disjoint union of the nonempty connected graphs $G_1,\ldots,G_n$ then each $G_i$ contains an edge. The subposet of the face poset of $P_G$ consisting of all cells with label $\prod_{v\in U}x_v$ is isomorphic to $\prod_{i\in [n]}M_{G_i}$.
\end{theorem}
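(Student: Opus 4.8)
The plan is to first shrink the ambient polytope from $P_G$ to $P_{G[U]}$, and then to exploit the join decomposition of Proposition \ref{Prop:Disconnect}.

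First I would reduce to the connected-components setup. Since $G[U]$ has no isolated vertex, Proposition \ref{Prop:Image} says that $P_{G[U]}$ has label $\prod_{v\in U}x_v$ and that every face of $P_G$ with that label is contained in $P_{G[U]}$. By Proposition \ref{Prop:Face}, $P_{G[U]}$ is itself a face of $P_G$, so the faces of $P_G$ contained in it are exactly the faces of the polytope $P_{G[U]}$, and for any such face $\sigma$ one has $\ell_G(\sigma)=\ell_{G[U]}(\sigma)$ because every vertex of $\sigma$ is of the form $\mathbf{e}_i+\mathbf{e}_j$ with $ij\in E(G[U])$. Hence the subposet of the face poset of $P_G$ consisting of cells with label $\prod_{v\in U}x_v$ is, as a poset, precisely $M_{G[U]}$, and it suffices to prove $M_{G[U]}\cong\prod_{i\in[n]}M_{G_i}$. (The fact that each $G_i$ contains an edge is immediate, since $G[U]$ has no isolated vertex.)

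Next I would apply Proposition \ref{Prop:Disconnect} to the decomposition $G[U]=G_1\sqcup\cdots\sqcup G_n$: the polytope $P_{G[U]}$ is a geometric realization of the join $*_{i\in[n]}P_{G_i}$, with the $P_{G_i}$ lying in mutually orthogonal, nonintersecting affine subspaces, and $\ell_{G[U]}(*_{i\in[n]}\sigma_i)=\prod_{i\in[n]}\ell_{G_i}(\sigma_i)$ where $\ell_{G_i}(\emptyset)=1$. The structural fact I would record here is the standard description of the face lattice of a join of polytopes: every face of $*_{i\in[n]}P_{G_i}$ is of the form $*_{i\in[n]}\sigma_i$ with each $\sigma_i$ a face of $P_{G_i}$ (the empty face allowed), this decomposition is unique, and $*_{i\in[n]}\sigma_i\subseteq *_{i\in[n]}\tau_i$ if and only if $\sigma_i\subseteq\tau_i$ for all $i$. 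Uniqueness and the componentwise order follow from the orthogonality of the ambient subspaces supplied by Proposition \ref{Prop:Disconnect}. Thus $\sigma\mapsto(\sigma_1,\dots,\sigma_n)$ is an isomorphism from the face poset of $P_{G[U]}$ (with empty face) onto $\prod_{i\in[n]}\widehat L(P_{G_i})$, where $\widehat L$ denotes the face poset with the empty face adjoined.

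Finally I would cut down to the cells carrying the maximal label. A face $*_{i\in[n]}\sigma_i$ satisfies $\ell_{G[U]}(*_{i\in[n]}\sigma_i)=\prod_{v\in U}x_v$ if and only if $\prod_{i\in[n]}\ell_{G_i}(\sigma_i)=\prod_{v\in U}x_v$; since $\ell_{G_i}(\sigma_i)$ only involves the variables $x_v$ with $v\in V(G_i)$, and $U$ is the disjoint union of the $V(G_i)$, this holds if and only if $\ell_{G_i}(\sigma_i)=\prod_{v\in V(G_i)}x_v$ for every $i$, i.e. if and only if $\sigma_i\in M_{G_i}$ for all $i$ (note $\sigma_i\neq\emptyset$ since $G_i$ has an edge). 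Restricting the poset isomorphism of the previous paragraph to this subposet gives $M_{G[U]}\cong\prod_{i\in[n]}M_{G_i}$, and combined with the first reduction this proves the theorem. The only genuinely delicate point is the face-lattice description of a join — the uniqueness of the decomposition $\sigma=*_{i\in[n]}\sigma_i$ and that the order is componentwise — but since Proposition \ref{Prop:Disconnect} already hands us the realization with orthogonal ambient subspaces, this is a short convex-geometry argument; all the remaining steps are bookkeeping with labels.
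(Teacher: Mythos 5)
Your proposal is correct and follows essentially the same route as the paper's proof: reduce to $P_{G[U]}$ via Proposition \ref{Prop:Image}, invoke the join decomposition of Proposition \ref{Prop:Disconnect} together with the fact that the face poset of a join is the product of the face posets, and then restrict to the maximal-label cells using the label formula. You simply spell out the details (uniqueness of the join decomposition, the componentwise order, and the label bookkeeping) that the paper leaves implicit.
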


\begin{proof}
Proposition \ref{Prop:Image} shows that it is enough to consider the case $U=V(G)$. The face poset of a join of polytopes is the product of the face posets and then the result follows from Proposition \ref{Prop:Disconnect}. 
\end{proof}

Using the facet descriptions of $P_G$ it is possible to understand the set of facets in $M_G$ for connected $G$.

\begin{proposition}
Let $G$ be a connected graph with $E(G)\neq\emptyset$. Let $v\in V(G)$ be an ordinary vertex if $G$ is bipartite and let $v\in V(G)$ be a regular vertex if $G$ is not bipartite. The facet $P_{G\setminus v}$ of $P_G$ is not in $M_G$.
\end{proposition}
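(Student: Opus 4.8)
The plan is simply to observe that the label $\ell_G(P_{G\setminus v})$ misses the variable $x_v$, so it cannot equal the full product $\prod_{w\in V(G)}x_w$ and hence $P_{G\setminus v}\notin M_G$.

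First I would recall why $P_{G\setminus v}$ is an honest face of $P_G$: this is Proposition \ref{Prop:Face} applied to $U=V(G)\setminus\{v\}$, which identifies $P_{G\setminus v}=P_{G[U]}$ with $H\cap P_G$, where $H=\operatorname{span}\{\mathbf{e}_u\mid u\neq v\}$. In particular the vertices of $P_G$ lying on this face are exactly the points $\mathbf{e}_i+\mathbf{e}_j$ with $ij\in E(G)$ and $v\notin\{i,j\}$. (The hypothesis that $v$ is ordinary when $G$ is bipartite, resp. regular when $G$ is not bipartite, is exactly what upgrades this face to a \emph{facet} via Propositions \ref{Prop:Bipartite} and \ref{Prop:NotBipartite}; but for the present claim only the weaker fact that $P_{G\setminus v}$ is a face is used.)

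Next, by definition $\ell_G(P_{G\setminus v})$ is the least common multiple of the monomials $x_ix_j$ over the vertices $\mathbf{e}_i+\mathbf{e}_j$ of that face, and by the previous paragraph every such pair $i,j$ lies in $V(G)\setminus\{v\}$. Therefore $\ell_G(P_{G\setminus v})$ is a product of variables indexed by a subset of $V(G)\setminus\{v\}$, so $x_v$ does not divide it. Since $G$ is connected with $E(G)\neq\emptyset$ we have $v\in V(G)$, so $x_v$ does divide $\prod_{w\in V(G)}x_w$; hence $\ell_G(P_{G\setminus v})\neq\prod_{w\in V(G)}x_w$ and $P_{G\setminus v}$ is not in $M_G$. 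The lone degenerate case, $G=K_2$, in which $G\setminus v$ has no edge and $P_{G\setminus v}=\emptyset$ with label $1$, is covered by the same reasoning.

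I do not expect a genuine obstacle here: the only thing to keep straight is the division of labor between hypotheses — the ordinary/regular condition is what guarantees that $P_{G\setminus v}$ is a facet (so that the proposition is a statement about the facets described in Propositions \ref{Prop:Bipartite} and \ref{Prop:NotBipartite}), whereas the failure of membership in $M_G$ needs nothing more than the observation that a face avoiding the $v$-th coordinate hyperplane direction has a label that is not divisible by $x_v$.
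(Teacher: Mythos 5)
Your argument is correct and is essentially the paper's own proof, which simply notes that the label of $P_{G\setminus v}$ is not divisible by $x_v$ since $v$ is not a vertex of $G\setminus v$; your version just spells out the supporting details (the identification of the face via Proposition \ref{Prop:Face} and the role of the ordinary/regular hypothesis).
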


\begin{proof}
The label of $P_{G\setminus v}$ is not divisible by $x_v$ as $v$ is not a vertex of $G\setminus v$.
\end{proof}

\begin{proposition}\label{Prop:Facet}
Let $G$ be a connected graph with $E(G)\neq\emptyset$. If $G$ is not bipartite and $U$ is a fundamental set in $G$ then $P_{NC_G(U)}$ is in $M_G$. If $G$ is bipartite and $U$ is an acceptable set in $G$ then $P_{NC_G(U)}$ is in $M_G$.
\end{proposition}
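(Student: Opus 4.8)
The plan is to verify directly the two defining conditions for membership in $M_G$: that $P_{NC_G(U)}$ is a face of $P_G$, and that $\ell_G(P_{NC_G(U)}) = \prod_{v\in V(G)} x_v$. The first is immediate from the facet descriptions: if $G$ is not bipartite and $U$ is fundamental, then $P_{NC_G(U)}$ is a facet of $P_G$ by Proposition~\ref{Prop:NotBipartite}, and if $G$ is bipartite and $U$ is acceptable, then $P_{NC_G(U)}$ is a facet of $P_G$ by Proposition~\ref{Prop:Bipartite}. So the work is entirely in computing the label.

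For the label, first unwind the definitions: $NC_G(U) = N_G(U)\cup C_G(U)$ is a \emph{spanning} subgraph of $G$, since $V(N_G(U))\cup V(C_G(U)) = V(G)$ (as $C_G(U) = G[V(G)\setminus V(N_G(U))]$) and $E(NC_G(U))\subseteq E(G)$. The vertices of the polytope $P_{NC_G(U)}$ are exactly the points $\mathbf{e}_i+\mathbf{e}_j$ with $ij\in E(NC_G(U))$, and $\ell_G$ and $\ell_{NC_G(U)}$ assign the same monomial $x_ix_j$ to each of them, so $\ell_G(P_{NC_G(U)}) = \ell_{NC_G(U)}(P_{NC_G(U)})$. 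Since $U\neq\emptyset$ and every vertex of the connected graph $G$ with $E(G)\neq\emptyset$ has a neighbour, $N_G(U)$ (hence $NC_G(U)$) has an edge, so Proposition~\ref{Prop:Isolated} applies to the graph $NC_G(U)$: its label equals $\prod_{v\in V(NC_G(U))}x_v = \prod_{v\in V(G)}x_v$ if and only if $NC_G(U)$ has no isolated vertex. Thus it remains to show $NC_G(U)$ has no isolated vertex.

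I would split $V(G) = V(N_G(U))\,\sqcup\,V(C_G(U))$. Every vertex $v\in U$ has a neighbour in $G$ (connectedness and $E(G)\neq\emptyset$), so each $N_G(v)$ is a star on at least two vertices and thus has no isolated vertex; a union of such stars has no isolated vertex, so no vertex of $V(N_G(U))$ is isolated in $NC_G(U)$. For a vertex in $V(C_G(U))$, invoke the hypothesis on $U$: in the bipartite case $U$ acceptable gives that $C_G(U)$ is connected with $E(C_G(U))\neq\emptyset$, hence has no isolated vertex; in the non-bipartite case $U$ fundamental gives that no component of $C_G(U)$ is bipartite, so every component contains an edge (an edgeless component would be bipartite), hence $C_G(U)$ has no isolated vertex. (If $V(C_G(U))=\emptyset$ there is nothing to check.) Combining the two cases, $NC_G(U)$ has no isolated vertex, which finishes the proof.

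I do not anticipate a real obstacle here; the proposition is close to a direct computation. The only subtleties worth stating carefully are that $NC_G(U)$ is spanning but in general not an induced subgraph of $G$ — so one should not appeal to Proposition~\ref{Prop:Image} but rather reduce to Proposition~\ref{Prop:Isolated} for the graph $NC_G(U)$ itself — and the small observation that a one-vertex (edgeless) component counts as bipartite, which is what lets the ``no bipartite component'' hypothesis exclude isolated vertices of $C_G(U)$.
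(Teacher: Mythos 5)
Your argument is correct and follows essentially the same route as the paper: reduce membership in $M_G$ to the absence of isolated vertices in $NC_G(U)$ via Proposition~\ref{Prop:Isolated}, then handle $N_G(U)$ (stars with at least one edge, since no vertex of the connected graph $G$ is isolated) and $C_G(U)$ (no bipartite, hence no edgeless, component in the fundamental case; connected with an edge in the acceptable case) separately. Your extra remarks on $NC_G(U)$ being spanning but not induced just make explicit what the paper leaves implicit.
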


\begin{proof}
Proposition \ref{Prop:Isolated} says that the face $P_{NC_G(U)}$ is in $M_G$ if and only if $NC_G(U)$ has no isolated vertex. No component of $C_G(U)$ is an isolated vertex by definition of acceptable and fundamental. The fact that no vertex is isolated in $G$ proves that no vertex is isolated in $N_G(U)$.
\end{proof}

It is also possible to understand faces with lower dimension in $M_G$.

\begin{proposition}\label{Prop:Edge}
Let $G$ be a connected graph with $E(G)\neq\emptyset$. Let $U_1,\ldots, U_n$ be fundamental sets in $G$ if $G$ is not bipartite and let $U_1,\ldots, U_n$ be acceptable sets in $G$ if $G$ is bipartite. The face $\cap_{n\in[n]}P_{NC_G(U_i)}$ is in $M_G$ if and only if $\cap_{n\in[n]}NC_G(U_i)$ has no isolated vertices.
\end{proposition}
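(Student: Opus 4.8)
The plan is to reduce the statement to the computation of the label of a single face. Write $H=\bigcap_{i\in[n]}NC_G(U_i)$. The first step is to check that $V(NC_G(U_i))=V(G)$ for every $i$: the vertex set of $N_G(U_i)$ consists of the vertices of $U_i$ together with their neighbours, while $C_G(U_i)=G[V(G)\setminus V(N_G(U_i))]$ accounts for exactly the remaining vertices, so $NC_G(U_i)=N_G(U_i)\cup C_G(U_i)$ has vertex set all of $V(G)$. Since $H$ is an intersection of subgraphs each with vertex set $V(G)$, it follows that $V(H)=V(G)$ as well.

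The second step uses the identity $P_{G_1\cap G_2}=P_{G_1}\cap P_{G_2}$ recalled in Section~\ref{Sec:polytope}, applied inductively, to get $\bigcap_{i\in[n]}P_{NC_G(U_i)}=P_H$. By Proposition~\ref{Prop:Bipartite} (in the bipartite case, since each $U_i$ is acceptable) or Proposition~\ref{Prop:NotBipartite} (in the non-bipartite case, since each $U_i$ is fundamental), every $P_{NC_G(U_i)}$ is a facet of $P_G$, in particular a face; as an intersection of faces of a polytope is again a face, $P_H$ is a face of $P_G$. So the remaining question is exactly whether $\ell_G(P_H)=\prod_{v\in V(G)}x_v$.

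The third step computes $\ell_G(P_H)$ directly. The vertices of $P_H$ are the points $\mathbf{e}_i+\mathbf{e}_j$ with $ij\in E(H)$, so $\ell_G(P_H)=\mathrm{lcm}(x_ix_j\mid ij\in E(H))$, which is the product of $x_v$ over all vertices $v$ of $H$ that lie on some edge of $H$. Because $V(H)=V(G)$ by the first step, this product equals $\prod_{v\in V(G)}x_v$ if and only if $H$ has no isolated vertex. The degenerate case $E(H)=\emptyset$ fits this pattern: then $P_H=\emptyset$, $\ell_G(\emptyset)=1\neq\prod_{v\in V(G)}x_v$ since $E(G)\neq\emptyset$, and indeed every vertex of $H$ is isolated. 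One can also phrase this last step as an application of Proposition~\ref{Prop:Isolated} to the graph $H$, noting that $\ell_G$ and $\ell_H$ agree on the faces of $P_H$.

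I do not expect a genuine obstacle. The one point requiring care is that $H$ is in general \emph{not} an induced subgraph of $G$, because the graphs $N_G(U_i)$ are unions of stars rather than induced subgraphs; hence Proposition~\ref{Prop:Image} cannot be invoked directly, and the argument must instead be routed through $P_{G_1\cap G_2}=P_{G_1}\cap P_{G_2}$ together with the observation that all of the relevant vertex sets equal $V(G)$.
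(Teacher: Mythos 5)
Your proposal is correct and follows essentially the same route as the paper's proof: identify $\bigcap_{i\in[n]}P_{NC_G(U_i)}$ with $P_{\cap_{i\in[n]}NC_G(U_i)}$, observe that this intersection graph has vertex set $V(G)$, and conclude via Proposition~\ref{Prop:Isolated}. Your additional remarks (the inductive use of $P_{G_1\cap G_2}=P_{G_1}\cap P_{G_2}$, and the caveat that the intersection graph need not be induced so Proposition~\ref{Prop:Image} does not apply directly) are sound elaborations of steps the paper leaves implicit.
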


\begin{proof}
The face $\cap_{n\in[n]}P_{NC_G(U_i)}$ is $P_{\cap_{i\in[n]}NC_G(U_i)}$ and the vertex set of the graph $\cap_{i\in[n]}NC_G(U_i)$ is $V(G)$. Now the statement follow from Proposition \ref{Prop:Isolated}.
\end{proof}

\section{Complements of triangle-free graphs}\label{Sec:Comp}

In general finding the set of independent sets of a graph is itself a challenging problem. We restrict our attention to the easier case when $G$ is the complement of a triangle-free graph $\overline{G}$. As $\overline{G}$ is triangle-free the set of independent sets of $G$ is  $\{\emptyset\}\cup\{\{v\}\mid v\in V(G)\}\cup\{\{u,v\}\mid uv\in E(\overline{G})\}$.

An edge $uv$ in $E(\overline{G})$ is \emph{fundamental} if $\{u,v\}$ is fundamental in $G$, denote the set of fundamental edges $S_G$. A vertex $v\in V(G)$ is \emph{fundamental} if $\{v\}$ is fundamental in $G$.

The acceptability concept can also be extended, but acceptability is only relevant for bipartite graphs and there is only a handful of bipartite graphs with triangle-free complements.

\begin{proposition}
If $G$ is a connected bipartite graph so that $\overline{G}$ is triangle-free then $G$ is a subgraph of the cycle with four vertices.
\end{proposition}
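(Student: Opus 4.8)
The plan is to analyze the structure of a connected bipartite graph $G$ whose complement $\overline{G}$ is triangle-free by translating the triangle-free condition on $\overline{G}$ into a condition on $G$: $\overline{G}$ has no triangle exactly means that among any three vertices of $G$, at least one of the three possible edges is present in $G$. Equivalently, the complement graph $G$ has no independent set of size $3$; that is, $\overline{G}$ has no triangle iff every set of three vertices of $G$ spans at least one edge.

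First I would fix a bipartition $V(G)=A\sqcup B$, so all edges of $G$ go between $A$ and $B$. The key observation is that $A$ and $B$ are each independent sets in $G$, so by the triangle-free hypothesis on $\overline{G}$ we must have $|A|\le 2$ and $|B|\le 2$; otherwise three vertices inside $A$ (or inside $B$) would span no edge of $G$, producing a triangle in $\overline{G}$. Hence $|V(G)|\le 4$, and the only connected bipartite graphs on at most four vertices with both sides of size at most two are: a single edge $K_2$; a path on three vertices $P_3$; a path on four vertices $P_4$; a triangle-free... and the four-cycle $C_4$. I would then simply check that each of these is a subgraph of $C_4$: $K_2$, $P_3$, and $P_4$ are all obtained from $C_4$ by deleting edges (and, for $K_2$ and $P_3$, deleting vertices), and $C_4$ is a subgraph of itself. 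One should also note that $K_{1,2}=P_3$ already appears, and $K_{1,1}=K_2$; no connected bipartite graph on four vertices with parts of size $1$ and $3$ can occur since that would force a side of size $3$.

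The main obstacle — really the only thing requiring care — is making sure the case enumeration is genuinely exhaustive: one must confirm that parts of size $\le 2$ on each side, together with connectivity, leaves exactly the four graphs listed (up to isomorphism), and that the degenerate possibility $E(G)=\emptyset$ is excluded by the standing hypothesis that we only care about graphs with an edge (or is vacuously a subgraph of $C_4$ anyway). I would write the argument as: (1) both color classes are independent in $G$, hence have size $\le 2$ by triangle-freeness of $\overline{G}$; (2) enumerate connected bipartite graphs with $|A|,|B|\le 2$; (3) exhibit each as a subgraph of $C_4$. Steps (1) and (3) are immediate; step (2) is a finite check.
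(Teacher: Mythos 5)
Your proof is correct and rests on exactly the same observation as the paper's one-line argument: each side of the bipartition is an independent set in $G$, so triangle-freeness of $\overline{G}$ forces both sides to have at most two vertices. The additional enumeration of connected bipartite graphs with parts of size at most two is a routine finite check that the paper leaves implicit.
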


\begin{proof}
Neither part in the bipartition can have more than two vertices.
\end{proof}

The following list of examples explains $M_G$ for all connected subgraphs of the cycle with four vertices.

\begin{example}
If $G$ has no edges then $M_G$ is $\{\emptyset\}$ by definition.
\end{example}

\begin{example}
If $G$ is a path with one edge then $P_G$ is a point and $M_G$ only contains $P_G$.
\end{example}

\begin{example}
If $G$ is a path with two edges then $P_G$ is a line segment and $M_G$ only contains $P_G$.
\end{example}

\begin{example}\label{EX:Path}
If $G$ is the path with three edges then $P_G$ is a triangle and $M_G$ consists of $P_G$ and one of the edges in the triangle. The two elements in $M_G$ can be matched to give an acyclic matching in the Hasse diagram of $M_G$ with no critical elements.
\end{example}

\begin{example}
If $G$ is the cycle with four vertices then $P_G$ is a square and $M_G$ only contains $P_G$.
\end{example}

It is possible to understand the fundamental edges and vertices.

\begin{proposition}\label{Prop:FVertex}
Let $G$ be a graph so that $\overline{G}$ is triangle-free. Let $v\in V_d(\overline{G})$. The graph $C_G(v)$ is a clique with $d$ vertices. In particular $v$ is fundamental in $G$ if and only if $v\in V(G)\setminus (V_1(G)\cup V_2(G))$.
\end{proposition}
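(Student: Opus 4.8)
The plan is to read everything off the definitions: first pin down exactly which vertices lie in $C_G(v)$, then use triangle-freeness of $\overline G$ to see that they span a clique, and finally apply the definition of a fundamental set.

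First I would compute the vertex set of $C_G(v)$. Since $V(N_G(v))=\{v\}\cup\{u\in V(G)\mid uv\in E(G)\}$, the complement $V(G)\setminus V(N_G(v))$ is precisely the set $W$ of vertices $u\neq v$ with $uv\notin E(G)$, that is, the set of neighbors of $v$ in $\overline G$; as $v\in V_d(\overline G)$ we have $|W|=d$ and $C_G(v)=G[W]$. To see that $G[W]$ is a clique I would use that $\overline G$ is triangle-free: for distinct $u,w\in W$ the triple $\{v,u,w\}$ is not a triangle of $\overline G$, and since $uv,wv\in E(\overline G)$ this forces $uw\notin E(\overline G)$, hence $uw\in E(G)$. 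Thus every two vertices of $W$ are joined in $G$, so $C_G(v)=G[W]$ is the complete graph on $d$ vertices.

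For the final assertion I would recall that $N_G(v)$ is always a star with central vertex $v$, hence connected, so by definition $\{v\}$ is a fundamental set in $G$ if and only if no component of $C_G(v)$ is bipartite. Now $C_G(v)\cong K_d$: if $d=0$ it has no components and the condition holds vacuously; if $d\in\{1,2\}$ it is connected and bipartite, so the condition fails; if $d\ge 3$ it is connected and contains the triangle $K_3$, so it is not bipartite and the condition holds. Hence $v$ is fundamental in $G$ exactly when $d\notin\{1,2\}$, i.e. exactly when $v\in V(G)\setminus(V_1(\overline G)\cup V_2(\overline G))$.

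I expect no real obstacle here: the argument is bookkeeping with the definitions. The only points demanding a little care are the degenerate value $d=0$, where $C_G(v)$ is the edgeless graph on no vertices and one must invoke the convention that components are required to have a nonempty vertex set, and the remark that connectivity of $N_G(v)$ is automatic (even when $v$ has no $G$-neighbor and $N_G(v)$ is a single vertex), so that the connectivity clause in the definition of a fundamental set never plays a role here.
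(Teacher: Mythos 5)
Your proof is correct and follows essentially the same route as the paper's (which simply notes that $C_G(v)$ is a clique by triangle-freeness, that a clique has a bipartite component iff it has one or two vertices, and that $N_G(v)$ is a connected star); you merely spell out the bookkeeping in more detail, including the harmless degenerate case $d=0$. Note that in writing the conclusion as $v\notin V_1(\overline G)\cup V_2(\overline G)$ you have implicitly corrected what appears to be a typo in the statement (which writes $V_1(G)\cup V_2(G)$), consistent with how the paper uses this criterion later.
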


\begin{proof}
The graph $C_G(v)$ is a clique as $\overline{G}$ is triangle-free, the clique has a bipartite component if and only if it has one or two elements. The graph $N_G(v)$ is connected as it is a star.
\end{proof}

The criterion for a vertex to be fundamental is sometimes called the degree criterion for fundamentality.

\begin{proposition}\label{Prop:FEdge}
Let $G$ be a graph so that $\overline{G}$ is triangle-free and let $uv\in E(\overline{G})$. The graph $C_G(uv)$ has no vertices and $NC_G(uv)=N_G(u)\cup N_G(v)$. In particular $uv$ is fundamental if and only if $u$ and $v$ have a common neighbor in $G$.
\end{proposition}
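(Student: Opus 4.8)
The plan is to first establish the two structural claims about $C_G(uv)$ and $NC_G(uv)$, and then read off the fundamentality criterion from the definition of a fundamental set together with those claims.

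For the claim that $C_G(uv)$ has no vertices: since $uv\in E(\overline{G})$, the vertices $u$ and $v$ are non-adjacent in $G$, so $\{u,v\}$ is an independent set and $C_G(uv)=G[V(G)\setminus V(N_G(uv))]$ is defined. Recall that $V(N_G(uv))=V(N_G(u))\cup V(N_G(v))$ consists of $u$, of $v$, and of every vertex of $G$ adjacent in $G$ to $u$ or to $v$. I would take an arbitrary $w\in V(G)\setminus V(N_G(uv))$ and derive a contradiction: such a $w$ is distinct from $u$ and $v$ and is non-adjacent in $G$ to both $u$ and $v$, hence in $\overline{G}$ it is adjacent to both $u$ and $v$; combined with $uv\in E(\overline{G})$ this makes $\{u,v,w\}$ a triangle in $\overline{G}$, contradicting triangle-freeness. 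Therefore $V(G)\setminus V(N_G(uv))=\emptyset$, so $C_G(uv)$ has empty vertex set, and consequently $NC_G(uv)=N_G(uv)\cup C_G(uv)=N_G(u)\cup N_G(v)$.

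For the criterion, I would unwind the definition of fundamental set applied to the independent set $\{u,v\}$: it asks that $N_G(\{u,v\})=N_G(uv)$ be connected and that no component of $C_G(\{u,v\})=C_G(uv)$ be bipartite. The second condition holds vacuously because $C_G(uv)$ has no vertices, hence no components. So $uv$ is fundamental if and only if $N_G(u)\cup N_G(v)$ is connected. Here $N_G(u)$ is a star with center $u$ and $N_G(v)$ a star with center $v$, their edge sets being exactly the edges of $G$ incident to $u$, respectively to $v$, and $u\notin V(N_G(v))$, $v\notin V(N_G(u))$ since $uv\notin E(G)$. Every vertex of $N_G(u)\cup N_G(v)$ other than $u$ and $v$ is a leaf of one of the two stars, hence lies on an edge incident to $u$ or to $v$; thus the graph is connected exactly when $u$ and $v$ lie in a common component, and since every edge is incident to $u$ or to $v$ the only possible path between them has the form $u$--$w$--$v$ with $w$ adjacent in $G$ to both $u$ and $v$. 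Hence $N_G(u)\cup N_G(v)$ is connected if and only if $u$ and $v$ have a common neighbor in $G$, which is the stated equivalence.

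The triangle argument and the unwinding of the definitions are routine; the only place requiring a little care is the final connectivity analysis — one must check that in the union of the two stars a path from $u$ to $v$ is forced to pass through a common neighbor, so that connectivity is genuinely equivalent to the existence of such a neighbor and not to something weaker. I expect this to be the main (mild) obstacle, and it is dispatched by the observation that every edge of $N_G(u)\cup N_G(v)$ is incident to $u$ or to $v$.
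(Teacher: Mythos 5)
Your proof is correct and follows the same route as the paper's (much terser) proof: the paper likewise disposes of $C_G(uv)$ by triangle-freeness and reduces fundamentality to connectivity of the union of the two stars. You have simply filled in the details — the explicit triangle contradiction and the check that a $u$--$v$ path in $N_G(u)\cup N_G(v)$ must pass through a common neighbor — which the paper leaves to the reader.
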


\begin{proof}
The graph $C_G(uv)$ has no vertices as $\overline{G}$ is triangle-free. The graph $N_G(u)\cup N_G(v)$ is connected if and only if $u$ and $v$ have a common neighbor in $G$.
\end{proof}

\section{The graph $F(\overline{G},S_G)$ and $M_G$}\label{Sec:All}
This goal of this section is to describe $M_G$ when $G$ is connected and not bipartite and $\overline{G}$ is triangle-free. The main result essentially states that $M_G$ is isomorphic to the dualization of the face poset of $F(\overline{G},S_G)$. That $F(\overline{G},S_G)$ is well defined follows from the following proposition.

\begin{proposition}
Let $G$ be connected and not bipartite and $\overline{G}$ triangle-free. All vertices in $V_1(G)\cup V_2(G)$ are endpoints of edges in $S_G$.
\end{proposition}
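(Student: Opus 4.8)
The plan is to reduce the statement to the common-neighbour criterion of Proposition~\ref{Prop:FEdge} and then finish by an elementary connectivity argument. Since $F(\overline G,S_G)$ is defined precisely when every vertex of degree $1$ or $2$ in $\overline G$ lies on an edge of $S_G$, fix such a vertex $w$ and let $N$ be its neighbourhood in $\overline G$, so that $1\le|N|\le 2$. Because $\overline G$ is triangle-free, the two-element independent sets of $G$ are exactly the pairs $\{u,v\}$ with $uv\in E(\overline G)$; in particular, for each $v\in N$ the pair $\{v,w\}$ is independent in $G$, and by Proposition~\ref{Prop:FEdge} the edge $vw$ belongs to $S_G$ if and only if $v$ and $w$ have a common neighbour in $G$. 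The neighbours of $w$ in $G$ are exactly the vertices of $V(G)\setminus(N\cup\{w\})$, so it suffices to find some $v\in N$ possessing a neighbour in $G$ outside $N\cup\{w\}$.

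To produce such a $v$, I would argue by contradiction. Suppose no $v\in N$ has a $G$-neighbour outside $N\cup\{w\}$. Then every $G$-neighbour of every $v\in N$ lies in $N\cup\{w\}$, and since no $v\in N$ is adjacent to $w$ in $G$ (because $vw\in E(\overline G)$), it in fact lies in $N\setminus\{v\}$. Hence no edge of $G$ joins a vertex of $N$ to a vertex outside $N$, so $N$ is a union of connected components of $G$; as $N$ is nonempty and $w\notin N$, the graph $G$ is disconnected, contradicting the hypothesis. Therefore some $v\in N$ has a $G$-neighbour $a\notin N\cup\{w\}$; then $a\neq v$ automatically, $a$ is a common neighbour of $v$ and $w$ in $G$, and Proposition~\ref{Prop:FEdge} gives $vw\in S_G$ with $w$ as one of its endpoints.

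I do not expect a real obstacle. Once Proposition~\ref{Prop:FEdge} translates ``there is a fundamental edge at $w$'' into ``two $G$-nonadjacent vertices with a common $G$-neighbour'', what remains uses only connectedness of $G$. The one point to watch is the degenerate case $|N|=1$: there the supposition forces the unique $\overline G$-neighbour of $w$ to be isolated in $G$, but this is already subsumed by the component argument above. It is also worth noting that non-bipartiteness of $G$ and triangle-freeness of $\overline G$ enter only through the description of the relevant independent sets as edges of $\overline G$; otherwise they are carried along merely as the ambient hypotheses of the section.
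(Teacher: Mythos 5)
Your proof is correct and follows essentially the same route as the paper: both reduce the claim to the common-neighbour criterion of Proposition~\ref{Prop:FEdge} and derive a contradiction with the connectedness of $G$ when no edge of $\overline{G}$ at the given vertex $w$ is fundamental. The only difference is presentational --- the paper splits into the degree-one and degree-two cases and identifies the failing configuration explicitly, while you treat both uniformly by observing that the $\overline{G}$-neighbourhood $N$ of $w$ would then have no $G$-edges leaving it, disconnecting $G$.
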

\begin{proof}
If $v\in V_1(G)$ then there is a unique edge $uv\in E(\overline{G})$ containing $v$. Now $u$ and $v$ have a common neighbor in $G$ if and only if $u$ is not adjacent to all other vertices in $\overline{G}$. If $u$ is a neighbor to all other vertices in $\overline{G}$ then $u$ is isolated in $G$, this can not happen and then $uv$ is fundamental.

Similarly if $v\in V_2(G)$ then the edges $uv$ and $vw$ containing $v$ are both not fundamental if and only if both $v$ and $w$ are neighbors in $\overline{G}$ to everything in $V(\overline{G})\setminus \{u,v,w\}$. If both $v$ and $w$ are neighbors in $\overline{G}$ to everything in $V(\overline{G})\setminus \{u,v,w\}$ then $\overline{G}$ is a complete bipratite graph $G$ is disconnected.
\end{proof}

To prove the stated description of $M_G$  it is first necessary to determine all pairs of facets in $M_G$ whose intersection is a face in $M_G$. In order to do this all possible pairs are divided into types depending on some combinatorial data.

There are eleven different types of combinatorial pairs of fundamental sets in $G$ where $\overline{G}$ is triangle-free and $G$. The list is as follows.

\begin{enumerate}
\item\label{1} Two sets $\{u\}$ and $\{v\}$ where $uv\in E(\overline{G})$ is fundamental.
\item\label{A} Two sets $\{u\}$ and $\{v\}$ where $uv\in E(\overline{G})$ is not fundamental.
\item\label{3}  Two sets $\{u\}$ and $\{v\}$ where $uv\in E(G)$.
\item\label{4}  Two disjoint two element sets $\{u,v\}$ and $\{u',v'\}$.
\item\label{5}  Two not disjoint two element sets $\{u,v\}$ and $\{u,w\}$ where $u$ is fundamental.
\item\label{B} Two not disjoint two element sets $\{u,v\}$ and $\{u,w\}$ where $u$ is not fundamental.
\item\label{C} A two element set $\{u,v\}$ and a singleton $\{v\}$.
\item\label{8} A two element set $\{u,v\}$ and a singleton $\{w\}$ where $w$ is not adjacent to either $u$ or $v$ in $\overline{G}$.
\item\label{9} A two element set $\{u,v\}$ and a singleton $\{w\}$ where $w$ is adjacent $u$ but not $v$ in $\overline{G}$. The edge $uw$ is fundamental.
\item\label{10} A two element set $\{u,v\}$ and a singleton $\{w\}$ where $w$ is adjacent $u$ but not $v$ in $\overline{G}$. The edge $uw$ not fundamental. The vertex $u$ is fundamental.
\item \label{D} A two element set $\{u,v\}$ and a singleton $\{w\}$ where $w$ is adjacent $u$ but not $v$ in $\overline{G}$. The edge $uw$ not fundamental. The vertex $u$ is not fundamental.
\end{enumerate}

The following list of propositions show exactly what types of pairs from the list give intersections $P_{NC_G(U)}\cap P_{NC_G(U')}$ in $M_G$. The proofs either demonstrate an isolated vertex in $NC_G(U)\cap NC_G(U)$ or give edges to all vertices.

\begin{proposition}
Let $\{u\}$ and $\{v\}$ be a pair of fundamental sets of type \ref{1} then $P_{NC_G(u)}\cap P_{NC_G(v)}$ is not in $M_G$.
\end{proposition}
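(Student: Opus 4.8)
The plan is to use Proposition~\ref{Prop:Edge}: the intersection $P_{NC_G(u)}\cap P_{NC_G(v)}$ equals $P_{NC_G(u)\cap NC_G(v)}$, and it lies in $M_G$ if and only if the graph $NC_G(u)\cap NC_G(v)$ has no isolated vertex. So the entire task is to exhibit an isolated vertex in this graph, and the structural decomposition recalled in Section~\ref{Sec:Graph} tells us to inspect the four pieces $N_G(u)\cap N_G(v)$, $N_G(u)\cap C_G(v)$, $C_G(u)\cap N_G(v)$, and $C_G(u)\cap C_G(v)$ separately.

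First I would use the triangle-free hypothesis together with Proposition~\ref{Prop:FEdge}: since $uv\in E(\overline G)$ and the pair is of type~\ref{1}, the edge $uv$ is fundamental, which by Proposition~\ref{Prop:FEdge} means $u$ and $v$ have a common neighbor $w$ in $G$. This vertex $w$ is the candidate isolated vertex. Since $w$ is adjacent to $u$ in $G$ we have $w\in V(N_G(u))$ and $w\notin V(C_G(u))$; likewise $w\in V(N_G(v))$ and $w\notin V(C_G(v))$. Hence $w$ only appears in the piece $N_G(u)\cap N_G(v)$, and I must check that $w$ is isolated there. In $N_G(u)$ the only edges are the star edges through $u$, so the edges of $N_G(u)$ incident to $w$ are just $uw$; similarly the only edge of $N_G(v)$ incident to $w$ is $vw$. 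Since $u\neq v$, the edge $uw$ is not an edge of $N_G(v)$ (its endpoints are $u,w$, and $u\notin V(N_G(v))$ unless $uv\in E(G)$, which is false because $uv\in E(\overline G)$), and symmetrically $vw$ is not an edge of $N_G(u)$. Therefore no edge incident to $w$ survives in $N_G(u)\cap N_G(v)$, so $w$ is isolated in $NC_G(u)\cap NC_G(v)$, and Proposition~\ref{Prop:Edge} gives that $P_{NC_G(u)}\cap P_{NC_G(v)}$ is not in $M_G$.

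The main subtlety to get right is bookkeeping about which piece of the four-part decomposition the common neighbor $w$ lands in, and in particular verifying that $w$ is genuinely a vertex of $N_G(u)\cap N_G(v)$ (it is, since $w\in V(N_G(u))\cap V(N_G(v))$ and edges of an intersection are the common edges) while carefully excluding the possibility that $w$ coincides with $u$ or $v$ — but $w$ is a neighbor in $G$ of both, and $uv\notin E(G)$, so $w\neq u$ and $w\neq v$. Once this is pinned down the argument is immediate; I expect no real obstacle beyond being precise about the star structure of the neighborhood graphs.
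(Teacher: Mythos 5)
Your proposal is correct and follows essentially the same route as the paper: both produce a common neighbor $w$ of $u$ and $v$ in $G$ from the fundamentality of $uv$ (Proposition~\ref{Prop:FEdge}) and observe that $w$ is isolated in $N_G(u)\cap N_G(v)$, hence in $NC_G(u)\cap NC_G(v)$. Your version merely spells out the bookkeeping (that $w$ lies only in the $N_G(u)\cap N_G(v)$ piece and that the star edges $uw$ and $vw$ do not survive the intersection) which the paper leaves implicit.
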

\begin{proof}
As $uv$ is fundamental there is some vertex $w$ adjacent to both $u$ and $v$ in $G$. Now $w$ is isolated in $N_G(u)\cap N_G(v)$.
\end{proof}

\begin{proposition}\label{prop:a}
Let $\{u\}$ and $\{v\}$ be a pair of fundamental sets of type \ref{A} then $P_{NC_G(u)}\cap P_{NC_G(v)}$ is in $M_G$.
\end{proposition}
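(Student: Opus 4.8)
The plan is to reduce the statement, via Proposition \ref{Prop:Edge} applied to $U_1=\{u\}$ and $U_2=\{v\}$ (both fundamental sets in the connected non-bipartite graph $G$), to the claim that the graph $NC_G(u)\cap NC_G(v)$ has no isolated vertex. To verify this I would follow the strategy indicated before the list of propositions and treat the four pieces of the disjoint decomposition
\[(N_G(u)\cap N_G(v))\cup(N_G(u)\cap C_G(v))\cup(C_G(u)\cap N_G(v))\cup(C_G(u)\cap C_G(v))\]
separately, showing that two of them are empty and the other two are complete stars.

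First I would check that two pieces have empty vertex set. By the clique description in Proposition \ref{Prop:FVertex}, $V(C_G(u))$ and $V(C_G(v))$ are the neighbourhoods of $u$ and $v$ in $\overline{G}$; since $uv\in E(\overline{G})$ and $\overline{G}$ is triangle-free, $u$ and $v$ have no common neighbour in $\overline{G}$, so $C_G(u)\cap C_G(v)$ has no vertices. For $N_G(u)\cap N_G(v)$ I would note that $u\notin V(N_G(v))$ and $v\notin V(N_G(u))$ because $uv\notin E(G)$, while $u$ and $v$ have no common neighbour in $G$ because $uv$ is not fundamental (Proposition \ref{Prop:FEdge}); hence this piece is empty as well. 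Consequently every vertex of $NC_G(u)\cap NC_G(v)$ lies in $N_G(u)\cap C_G(v)$ or in $C_G(u)\cap N_G(v)$.

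Next I would identify the two surviving pieces. Any neighbour $w$ of $u$ in $G$ satisfies $w\neq v$ (otherwise $uv\in E(G)$) and $wv\notin E(G)$ (otherwise $w$ would be a common $G$-neighbour of $u$ and $v$, contradicting non-fundamentality of $uv$), so $w\in V(C_G(v))$; and $u\in V(C_G(v))$ as well. Since $C_G(v)$ is an induced subgraph of $G$, every star edge of $N_G(u)$ survives in $C_G(v)$, so $N_G(u)\cap C_G(v)=N_G(u)$, which is a star with at least one edge (as $G$ is connected and has an edge, $u$ is not isolated) and hence has no isolated vertex. Symmetrically $C_G(u)\cap N_G(v)=N_G(v)$ has no isolated vertex. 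Together with the previous paragraph this shows $NC_G(u)\cap NC_G(v)$ has no isolated vertex, and Proposition \ref{Prop:Edge} yields the proposition.

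The computation itself is routine; the step that requires genuine care — and where an error is most likely — is keeping straight which hypothesis kills which piece of the decomposition: triangle-freeness of $\overline{G}$ together with $uv\in E(\overline{G})$ is precisely what empties $C_G(u)\cap C_G(v)$, whereas non-fundamentality of $uv$ (the absence of a common $G$-neighbour, by Proposition \ref{Prop:FEdge}) is what both empties $N_G(u)\cap N_G(v)$ and forces the two mixed pieces to be complete stars. One should also record the harmless check that $u$ and $v$ each have a neighbour in $G$, which holds because $G$ is connected with a nonempty edge set.
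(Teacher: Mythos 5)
Your argument is correct and follows essentially the same route as the paper: both reduce to showing $NC_G(u)\cap NC_G(v)$ has no isolated vertex and establish that this graph is the disjoint union $N_G(u)\cup N_G(v)$ of two stars without isolated vertices, using triangle-freeness of $\overline{G}$ to empty $C_G(u)\cap C_G(v)$ and non-fundamentality of $uv$ to empty $N_G(u)\cap N_G(v)$ and preserve the stars. Your version merely spells out the four-piece decomposition more explicitly than the paper does.
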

\begin{proof}
As $uv$ is not fundamental all other vertices are adjacent to either $u$ or $v$ in $\overline{G}$, and no vertex is adjacent to both. This property carries over to $G$. In particular $NC_G(u)\cap NC_G(v)=N_G(u)\cup N_G(v)$ as $u$ and $v$ are not adjacent in $G$. None of the stars $N_G(u)$ and $N_G(v)$ have isolated vertices as $G$ is connected and not bipartite.
\end{proof}

\begin{proposition}
Let $\{u\}$ and $\{v\}$ be a pair of fundamental sets of type \ref{3} then $P_{NC_G(u)}\cap P_{NC_G(v)}$ is not in $M_G$.
\end{proposition}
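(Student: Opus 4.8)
The plan is to apply Proposition \ref{Prop:Edge}: the face $P_{NC_G(u)}\cap P_{NC_G(v)}$ lies in $M_G$ if and only if the graph $NC_G(u)\cap NC_G(v)$ has no isolated vertex, so it suffices to produce one isolated vertex. The vertex I would use is a neighbour of $u$ in $G$ distinct from $v$; the content of the argument is that passing from $G$ to the intersection of the two closed stars cuts such a vertex off entirely.

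First I would check that such a vertex exists. Since $\{u\}$ is fundamental, Proposition \ref{Prop:FVertex} gives $u\in V(G)\setminus(V_1(G)\cup V_2(G))$, and because $G$ is connected with at least one edge this forces $\deg_G(u)\geq 3$. As the edge $uv\in E(G)$ accounts for only one of these neighbours, there is a vertex $w$ with $uw\in E(G)$ and $w\neq v$ (hence also $w\neq u$).

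Next I would argue that $w$ is isolated in $NC_G(u)\cap NC_G(v)$. Since $uw\in E(G)$, the vertex $w$ lies in the star $N_G(u)$ and hence not in $C_G(u)=G[V(G)\setminus V(N_G(u))]$; therefore every edge of $NC_G(u)=N_G(u)\cup C_G(u)$ incident to $w$ is already an edge of $N_G(u)$, and since $N_G(u)$ is a star centred at $u$ the only such edge is $uw$. It then suffices to show $uw\notin E(NC_G(v))$. The edge $uw$ is not in $N_G(v)$, because $N_G(v)$ is a star centred at $v$ and $v\notin\{u,w\}$; and it is not in $C_G(v)$, because $uv\in E(G)$ makes $u$ a vertex of $N_G(v)$ and hence $u\notin V(C_G(v))$. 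Since $V(NC_G(u))=V(NC_G(v))=V(G)$, the vertex $w$ belongs to $NC_G(u)\cap NC_G(v)$ and has no incident edge there, so it is isolated, and Proposition \ref{Prop:Edge} yields the conclusion.

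I do not anticipate a genuine obstacle; the only point requiring care is the claim that the sole edge of $NC_G(u)$ at $w$ is $uw$, which uses both that $w$ is a non-central vertex of the star $N_G(u)$ and that $w$ contributes no edges coming from the clique $C_G(u)$. The remainder is a direct unwinding of the definitions of $N_G$, $C_G$ and $NC_G$ together with the hypothesis $uv\in E(G)$.
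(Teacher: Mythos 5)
There is a genuine gap at the very first step, the existence of your vertex $w$. The degree criterion for fundamentality concerns the degree of the vertex in $\overline{G}$, not in $G$: in the proof of Proposition \ref{Prop:FVertex} the quantity $d$ is $\deg_{\overline{G}}(v)$, the graph $C_G(v)$ is a clique on $d$ vertices, and $\{v\}$ is fundamental exactly when $v\notin V_1(\overline{G})\cup V_2(\overline{G})$; this is also how the criterion is used everywhere in Section \ref{Sec:All} (for instance in the facet description preceding Lemma \ref{lemma:main}). So fundamentality of $\{u\}$ gives $\deg_{\overline{G}}(u)\geq 3$ and says nothing about $\deg_G(u)$; your inference $\deg_G(u)\geq 3$ is false. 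Concretely, take $V=\{u,v,a,b,c,d\}$ and $E(\overline{G})=\{ua,ub,uc,ud,va,vb,vc\}$. Then $\overline{G}$ is bipartite, hence triangle-free; $G$ is connected and not bipartite (it contains the clique on $\{a,b,c,d\}$, the edge $uv$ and the edge $vd$); both $\{u\}$ and $\{v\}$ are fundamental since $\deg_{\overline{G}}(u)=4$ and $\deg_{\overline{G}}(v)=3$; and $uv\in E(G)$. But the only $G$-neighbour of $u$ is $v$, so the vertex $w$ you want to isolate does not exist.

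The rest of your argument is sound and in fact slicker than the paper's: once $w$ is a $G$-neighbour of $u$ with $w\neq v$, your observation that the only edge of $NC_G(u)$ at $w$ is $uw$ and that $uw$ lies in neither $N_G(v)$ nor $C_G(v)$ isolates $w$ in one stroke, with no need for the paper's case split on whether $u$ and $v$ have a common neighbour. The gap is also repairable along your own lines: since $G$ is connected and not bipartite it cannot be the single edge $uv$, so at least one of $u$, $v$ has a $G$-neighbour $w\notin\{u,v\}$, and your argument is symmetric in $u$ and $v$, so that $w$ is isolated in $NC_G(u)\cap NC_G(v)$. As written, however, the existence step rests on a misreading of the degree criterion and the proof is incomplete.
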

\begin{proof}
The proof is split into two parts if $u$ and $v$ have a common neighbor in $G$ or not.

If $u$ and $v$ have a common neighbor $w$ in $G$ then $w$ is isolated in $N_G(u)\cap N_G(v)$.

The graphs $C_G(u)\cap N_G(v)$ and $C_G(v)\cap N_G(u)$ have no edges as all edges in $N_G(u)$ go to $u$ while $u\notin V(C_G(v))$ and similarly for  $N_G(v)$ and $C_G(u)$. If $u$ and $v$ have no common neighbor then at least one of the graphs $C_G(u)\cap N_G(v)$ and $C_G(v)\cap N_G(u)$ have a vertex $w$ as $G$ is not bipartite. Now $w$ is isolated in $NC_G(u)\cap NC_G(v)$.
\end{proof}

\begin{proposition}
Let $\{u,v\}$ and $\{u',v'\}$ be a pair of fundamental sets of type \ref{4} then $P_{NC_G(uv)}\cap P_{NC_G(u'v')}$ is not in $M_G$.
\end{proposition}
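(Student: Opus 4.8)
The plan is to show that the graph $NC_G(uv)\cap NC_G(u'v')$ always has an isolated vertex; Proposition \ref{Prop:Edge} then immediately gives that the face $P_{NC_G(uv)}\cap P_{NC_G(u'v')}$ is not in $M_G$.

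First I would apply Proposition \ref{Prop:FEdge}. Since $uv,u'v'\in E(\overline G)$ and $\overline G$ is triangle-free, $C_G(uv)$ and $C_G(u'v')$ have no vertices, so $NC_G(uv)=N_G(u)\cup N_G(v)$ and $NC_G(u'v')=N_G(u')\cup N_G(v')$. Triangle-freeness of $\overline G$ also means $u$ and $v$ have no common neighbor in $\overline G$, so every vertex of $G$ is a non-neighbor in $\overline G$ of $u$ or of $v$, hence lies in $N_G(u)$ or $N_G(v)$; thus $V(NC_G(uv))=V(G)$, and likewise $V(NC_G(u'v'))=V(G)$. Therefore $NC_G(uv)\cap NC_G(u'v')$ has vertex set all of $V(G)$, and it only remains to locate isolated vertices.

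Next I would analyze the edges. Each $N_G(w)$ is the star at $w$ carrying all $G$-edges incident to $w$, so the edges of $N_G(u)\cup N_G(v)$ are exactly the edges of $G$ incident to $u$ or to $v$. Intersecting, the edges of $NC_G(uv)\cap NC_G(u'v')$ are precisely the edges of $G$ incident both to $\{u,v\}$ and to $\{u',v'\}$; since $\{u,v\}$ and $\{u',v'\}$ are disjoint, each such edge joins a vertex of $\{u,v\}$ to a vertex of $\{u',v'\}$. Consequently every vertex of $G$ outside $\{u,v,u',v'\}$ is isolated in $NC_G(uv)\cap NC_G(u'v')$, and the proof is finished as soon as such a vertex exists.

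The one point that needs care is ruling out the degenerate case $V(G)=\{u,v,u',v'\}$. Here I would observe that $\{u,v\}$ and $\{u',v'\}$ are independent sets in $G$ (they are edges of $\overline G$), so all edges of $G$ run between the two pairs, making $G$ bipartite with parts $\{u,v\}$ and $\{u',v'\}$ — contradicting the standing assumption that $G$ is not bipartite. Hence this case cannot occur, so $NC_G(uv)\cap NC_G(u'v')$ has an isolated vertex, and Proposition \ref{Prop:Edge} completes the argument. Apart from this degenerate-case check, the only mild subtlety is bookkeeping of vertex sets versus edge sets when intersecting the $NC$ graphs, which is routine once Proposition \ref{Prop:FEdge} has been invoked.
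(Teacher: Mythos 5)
Your proposal is correct and follows essentially the same route as the paper's proof: reduce to $N_G(u)\cup N_G(v)$ via the triangle-free condition, observe that edges of the intersection only run among $\{u,v,u',v'\}$, and use non-bipartiteness of $G$ to guarantee a vertex outside that set, which is then isolated. You simply spell out a couple of steps (the vertex-set bookkeeping and the bipartition $\{u,v\}\cup\{u',v'\}$) that the paper leaves implicit.
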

\begin{proof}
The graphs $C_G(uv)$ and $C_G(u'v')$ are empty as $\overline{G}$ is triangle-free. The edges in the graph $N_G(uv)\cap N_G(u'v')$ only go between elements of $\{u,v,u',v'\}$. Now all elements in $V(G)\setminus \{u,v,u',v'\}$ are isolated in $G$. The set $V(G)\setminus \{u,v,u',v'\}$ is not empty as $G$ is not bipartite.
\end{proof}

\begin{proposition}
Let $\{u,v\}$ and $\{u,w\}$ be a pair of fundamental sets of type \ref{5} then $P_{NC_G(uv)}\cap P_{NC_G(uw)}$ is not in $M_G$.
\end{proposition}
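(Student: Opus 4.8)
The plan is to show that $NC_G(uv)\cap NC_G(uw)$ always has an isolated vertex when $\{u,v\}$ and $\{u,w\}$ are fundamental sets in $G$ with $\overline G$ triangle-free and $u$ fundamental. By Proposition~\ref{Prop:FEdge}, since $\overline G$ is triangle-free the graphs $C_G(uv)$ and $C_G(uw)$ have no vertices, so $NC_G(uv)=N_G(u)\cup N_G(v)$ and $NC_G(uw)=N_G(u)\cup N_G(w)$. Hence the intersection decomposes as
\[
\bigl(N_G(u)\cap N_G(u)\bigr)\cup\bigl(N_G(u)\cap N_G(w)\bigr)\cup\bigl(N_G(v)\cap N_G(u)\bigr)\cup\bigl(N_G(v)\cap N_G(w)\bigr),
\]
with disjoint unions on the vertex-set level away from $\{u,v,w\}$. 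Since $N_G(u)$ is a star centered at $u$, every edge in any intersection involving $N_G(u)$ must be incident to $u$; and $N_G(v)\cap N_G(w)$ only has edges among $\{v,w\}$ together with common neighbors of $v$ and $w$.

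First I would identify a candidate isolated vertex. The natural candidate is a vertex that is a neighbor in $G$ of $v$ but not of $u$ and not of $w$ — equivalently (passing to $\overline G$) a vertex adjacent in $\overline G$ to $u$ and to $w$ but not to $v$, other than the already-used vertices. Such a vertex $x$ would lie in $V(N_G(v))$ but not in $V(N_G(u))$ or $V(N_G(w))$, hence it would be isolated in the intersection graph. The main step is therefore to prove such an $x$ exists. Here I would use that $u$ is fundamental, so by Proposition~\ref{Prop:FVertex} the vertex $u$ has degree at least $3$ in $\overline G$ (i.e.\ $u\in V(G)\setminus(V_1(G)\cup V_2(G))$), giving at least three vertices adjacent to $u$ in $\overline G$; combined with triangle-freeness of $\overline G$, the $\overline G$-neighbors of $u$ form an independent set in $\overline G$, so in particular $v$ and $w$ are not $\overline G$-adjacent, and there is a third $\overline G$-neighbor $x$ of $u$ distinct from $v,w$, which cannot be $\overline G$-adjacent to either $v$ or $w$. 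That $x$ is $\overline G$-adjacent to $w$ is not what I want; rather I want $x$ \emph{not} $\overline G$-adjacent to $v$ and \emph{not} $\overline G$-adjacent to $w$, i.e.\ $x$ is a $G$-neighbor of both $v$ and $w$ but not of $u$. Translating: I want $x$ with $ux\in E(\overline G)$, $vx,wx\notin E(\overline G)$; triangle-freeness already forces $vx,wx\notin E(\overline G)$ once $uv,uw\in E(\overline G)$, so \emph{any} third $\overline G$-neighbor of $u$ works, and $u$ fundamental supplies one.

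Then I would verify $x$ is genuinely isolated: $x\notin V(N_G(u))$ since $ux\notin E(G)$; $x\notin V(N_G(w))$ and $x\notin V(N_G(v))$ is false — in fact $x\in V(N_G(v))$ and $x\in V(N_G(w))$ since $vx,wx\in E(G)$. So $x$ is isolated in $NC_G(uv)\cap NC_G(uw)$ precisely when it is not adjacent in that graph to anything; its only potential edges in $NC_G(uv)$ go to $v$ (as $N_G(u)$ contributes nothing at $x$), and its only potential edges in $NC_G(uw)$ go to $w$, so $x$ has an edge in the intersection only if $x$ is simultaneously adjacent to $v$ in one graph and to $w$ in the other — impossible since $x$ has at most one neighbor pattern. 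More carefully: an edge at $x$ in the intersection is an edge of both $NC_G(uv)$ and $NC_G(uw)$; in $NC_G(uv)=N_G(u)\cup N_G(v)$ the edges at $x$ are only $xv$ (if present), and in $NC_G(uw)$ only $xw$; since $xv\neq xw$, no edge survives, so $x$ is isolated. By Proposition~\ref{Prop:Edge}, $P_{NC_G(uv)}\cap P_{NC_G(uw)}$ is then not in $M_G$. The main obstacle I anticipate is making the bookkeeping of the four intersection pieces airtight — in particular confirming that the piece $N_G(v)\cap N_G(w)$ contributes no edge at $x$ — but this is forced because $x$ is not a common $G$-neighbor of $v$ and $w$ in a way that produces an edge incident to $x$ there; and ensuring the "third neighbor" argument does not accidentally require the graph to be non-bipartite (it does not — only $u$ fundamental is used).
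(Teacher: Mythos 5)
Your proof is correct and is essentially the paper's own argument: the paper likewise uses the degree criterion for fundamentality of $u$ to produce a third $\overline{G}$-neighbor $u'$ of $u$ outside $\{u,v,w\}$, notes that triangle-freeness of $\overline{G}$ forces $u'$ to be adjacent in $G$ to both $v$ and $w$ but not to $u$, and concludes that $u'$ is isolated in $N_G(uv)\cap N_G(uw)$ (the paper's text says ``$w$ is isolated,'' an evident typo for $u'$). Your extra bookkeeping of the four intersection pieces is sound but could be trimmed, since $C_G(uv)$ and $C_G(uw)$ are empty and the only candidate edges at $x$ are $xv$ and $xw$.
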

\begin{proof}
By the degree criterion for fundamentality there is some vertex $u'\in V(G)\setminus\{u,v,w\}$ not adjacent to $u$ in $G$. Now $u'$ is adjacent to both $v$ and $w$ in $G$ as $\overline{G}$ is triangle-free. Now $w$ is isolated in $N_G(uv)\cap N_G(uw)$.
\end{proof}

\begin{proposition}\label{prop:b}
Let $\{u,v\}$ and $\{u,w\}$ be a pair of fundamental sets of type \ref{B} then $P_{NC_G(uv)}\cap P_{NC_G(uw)}$ is in $M_G$.
\end{proposition}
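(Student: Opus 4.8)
The plan is to exploit non-fundamentality of the shared vertex $u$ to pin down its neighborhood almost completely, after which the graph $NC_G(uv)\cap NC_G(uw)$ can be written down explicitly and is visibly without isolated vertices.

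First I would determine $u$. Since $\{u\}$ is not a fundamental set while $N_G(u)$ is a star and hence connected, the definition of fundamental set forces some component of $C_G(u)$ to be bipartite; but by Proposition \ref{Prop:FVertex} the graph $C_G(u)$ is a clique, so it is a connected bipartite complete graph and therefore has at most two vertices. Thus $u$ has at most two neighbors in $\overline{G}$. On the other hand $\{u,v\}$ and $\{u,w\}$ are distinct two-element independent sets both containing $u$, so $uv,uw\in E(\overline{G})$ and $v\neq w$, giving $u$ at least two neighbors in $\overline{G}$. Hence $v$ and $w$ are precisely the two neighbors of $u$ in $\overline{G}$, and in particular $u$ is adjacent in $G$ to every vertex of $V(G)\setminus\{u,v,w\}$.

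Next I would read off the intersection. By Proposition \ref{Prop:FEdge} the graphs $C_G(uv)$ and $C_G(uw)$ have no vertices, so $NC_G(uv)=N_G(u)\cup N_G(v)$ and $NC_G(uw)=N_G(u)\cup N_G(w)$. Moreover $vw\notin E(\overline{G})$ — otherwise $\{u,v,w\}$ would be a triangle in $\overline{G}$ — so $vw\in E(G)$, and this edge belongs to $N_G(v)$ and to $N_G(w)$, hence to both $NC_G(uv)$ and $NC_G(uw)$; every edge of $N_G(u)$ likewise belongs to both. Therefore $NC_G(uv)\cap NC_G(uw)$, whose vertex set is $V(G)$, contains every edge of $G$ incident to $u$ together with the edge $vw$. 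These edges already meet every vertex: a vertex distinct from $u,v,w$ is adjacent to $u$ in $G$ by the previous step, $u$ itself has a neighbor in $G$ since $G$ is connected and not a single vertex, and $v,w$ lie on $vw$. So $NC_G(uv)\cap NC_G(uw)$ has no isolated vertex, and Proposition \ref{Prop:Edge} gives $P_{NC_G(uv)}\cap P_{NC_G(uw)}=P_{NC_G(uv)\cap NC_G(uw)}\in M_G$.

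The one delicate point is the first step: the whole statement hinges on translating ``$u$ is not fundamental'' into ``$u$ misses at most two vertices of $G$'', so that the star $N_G(u)$ is large and its edges, together with the single edge $vw$, suffice to cover $V(G)$. Everything after that is routine bookkeeping with the part decomposition of $NC_G(U)\cap NC_G(U')$, which here collapses since the $C_G$-parts are empty.
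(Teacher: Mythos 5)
Your proof is correct and follows essentially the same route as the paper: both arguments use non-fundamentality of $u$ to conclude that $v,w$ are its only neighbors in $\overline{G}$, so that $N_G(u)$ covers $u$ and all of $V(G)\setminus\{u,v,w\}$, and both then use triangle-freeness of $\overline{G}$ to get the edge $vw\in E(G)$ covering $v$ and $w$. The only cosmetic difference is that you re-derive the degree criterion for fundamentality from Proposition \ref{Prop:FVertex} where the paper simply invokes it.
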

\begin{proof}
By the degree criterion for fundamentality $u$ is adjacent to all vertices in $V(G)\setminus\{u,v,w\}$ in $G$. The set $V(G)\setminus\{u,v,w\}$ is not empty as $G$ is not bipartite. Now all vertices in $V(G)\setminus\{v,w\}$ are in some edge in $N_G(uv)\cap N_G(uw)$. Finally $vw$ is also an edge in $N_G(uv)\cap N_G(uw)$ as $\overline{G}$ is triangle-free.
\end{proof}

\begin{proposition}\label{prop:c}
Let $\{u,v\}$ and $\{v\}$ be a pair of fundamental sets of type \ref{C} then $P_{NC_G(uv)}\cap P_{NC_G(v)}$ is in $M_G$.
\end{proposition}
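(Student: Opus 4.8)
The goal is to show, via Proposition~\ref{Prop:Edge}, that the graph $NC_G(uv)\cap NC_G(v)$ has no isolated vertex. Both $\{u,v\}$ and $\{v\}$ being fundamental (the standing hypothesis for type \ref{C} pairs) is exactly what licenses the appeal to Proposition~\ref{Prop:Edge}, but the no-isolated-vertex computation itself will only use that $uv\in E(\overline G)$ and that $\{v\}$ is fundamental. First I would record two setup facts. Since $uv\in E(\overline G)$ the vertices $u,v$ are non-adjacent in $G$, so $u\notin V(N_G(v))$ and hence $u\in V(C_G(v))$. Since $\{v\}$ is fundamental, no component of $C_G(v)$ is bipartite, and $C_G(v)$ is a clique by Proposition~\ref{Prop:FVertex}, so $C_G(v)$ has at least three vertices; in particular every vertex of $C_G(v)$ other than $u$ is adjacent to $u$ in $G$. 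Also, by Proposition~\ref{Prop:FEdge}, $NC_G(uv)=N_G(u)\cup N_G(v)$.

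The plan is then to partition $V(G)=V(N_G(v))\sqcup V(C_G(v))$ and cover each block by a star lying inside $NC_G(uv)\cap NC_G(v)$. The star $N_G(v)$ is a subgraph of $NC_G(uv)=N_G(u)\cup N_G(v)$ and of $NC_G(v)=N_G(v)\cup C_G(v)$, hence of the intersection; since $G$ has no isolated vertex, $v$ has a neighbor, so $N_G(v)$ has an edge and every vertex of $V(N_G(v))$ lies on one. For the other block, $N_G(u)\cap C_G(v)$ is a subgraph of the intersection, and because $C_G(v)$ is a clique containing $u$ it is precisely the star with center $u$ and leaves $V(C_G(v))\setminus\{u\}$; this star has an edge because $|V(C_G(v))|\geq 3$, so every vertex of $V(C_G(v))$ lies on one. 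Hence no vertex of $G$ is isolated in $NC_G(uv)\cap NC_G(v)$, and Proposition~\ref{Prop:Edge} gives $P_{NC_G(uv)}\cap P_{NC_G(v)}\in M_G$.

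I do not expect a genuine obstacle here: this is a direct computation of the kind the section is built around, with the pieces of the intersection involving $C_G(uv)$ and involving $N_G(v)\cap C_G(v)$ both collapsing. The one point requiring a little care is the standing bookkeeping that the vertex set of an intersection $\bigcap NC_G(U_i)$ is all of $V(G)$ (the first line of the proof of Proposition~\ref{Prop:Edge}), so that "no isolated vertex" really is the same as "every vertex of $V(G)$ lies on an edge of the intersection graph".
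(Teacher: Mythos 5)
Your proof is correct and follows essentially the same route as the paper's: both decompose $NC_G(uv)\cap NC_G(v)$ and cover $V(G)$ by the star $N_G(v)$ together with the star at $u$ inside $C_G(v)$, the latter existing because $\overline{G}$ is triangle-free (equivalently, $C_G(v)$ is a clique containing $u$) and having an edge by the degree criterion for the fundamentality of $v$. The only cosmetic difference is that you route the nonemptiness of $V(C_G(v))\setminus\{u\}$ through Proposition \ref{Prop:FVertex} rather than invoking the degree criterion directly.
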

\begin{proof}
As $u$ is not a neighbor of $v$ in $G$ it follows that $N_G(uv)\cap N_G(v)=N_G(v)$. In particular $v$ and the neighbors of $v$ are not isolated in $NC_G(uv)\cap NC_G(v)$, the vertex $v$ has neighbors as $G$ is connected and not bipartite. The vertices not adjacent to $v$ in $G$ are adjacent to $u$ in $G$ as $\overline{G}$ is triangle-free, there are vertices in $V(G)\setminus \{u,v\}$ not adjacent to $v$ by the degree criterion for fundamentality. Now $N_G(uv)\cap C_G(v)$ connects all the vertices not in $N_G(v)$ to $u$.
\end{proof}

\begin{proposition}
Let $\{u,v\}$ and $\{w\}$ be a pair of fundamental sets of type \ref{8} then $P_{NC_G(uv)}\cap P_{NC_G(w)}$ is not in $M_G$.
\end{proposition}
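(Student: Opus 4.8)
The plan is to invoke Proposition \ref{Prop:Edge}: the face $P_{NC_G(uv)}\cap P_{NC_G(w)}$ lies in $M_G$ if and only if the graph $NC_G(uv)\cap NC_G(w)$ has no isolated vertex, so it is enough to produce one isolated vertex. By Proposition \ref{Prop:FEdge} we have $C_G(uv)=\emptyset$ and $NC_G(uv)=N_G(u)\cup N_G(v)$, so in the four-part decomposition of Section \ref{Sec:Graph} the two pieces involving $C_G(uv)$ vanish and only $N_G(uv)\cap N_G(w)$ and $N_G(uv)\cap C_G(w)$ can carry edges; in particular every edge of $NC_G(uv)\cap NC_G(w)$ is incident to $u$ or $v$, since $N_G(u)$ and $N_G(v)$ are stars centered at $u$ and $v$.

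Next I would pick the candidate isolated vertex inside $C_G(w)$. Since $\{w\}$ is fundamental, Proposition \ref{Prop:FVertex} gives $w\in V(G)\setminus(V_1(G)\cup V_2(G))$, so $C_G(w)$ is a nonempty clique and $V(C_G(w))=\{x\in V(G)\mid xw\in E(\overline{G})\}$. Because the pair has type \ref{8}, $w$ is adjacent in $\overline{G}$ to neither $u$ nor $v$, hence $u,v\notin V(C_G(w))$, and any $x\in V(C_G(w))$ satisfies $x\neq u$ and $x\neq v$.

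Fix such an $x$. By the first paragraph, an edge of $NC_G(uv)\cap NC_G(w)$ incident to $x$ would have to be $xu$ or $xv$. But $xu$ is not incident to $w$, so it is not in the star $N_G(w)$, and it is not in $C_G(w)$ because $u\notin V(C_G(w))$; hence $xu\notin NC_G(w)$, and likewise $xv\notin NC_G(w)$. So $x$ is isolated in $NC_G(uv)\cap NC_G(w)$, and by Proposition \ref{Prop:Edge} the face is not in $M_G$. The argument is entirely bookkeeping; the only points that need a second look are that the chosen $x$ is distinct from $u$ and $v$ and that $xu,xv$ really fail to lie in $NC_G(w)$, so I do not expect a genuine obstacle here, in line with the other propositions of this section.
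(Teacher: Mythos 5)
Your argument is correct and is essentially the paper's own: both proofs produce an isolated vertex by taking any $x\in V(C_G(w))$ (nonempty because $w$ is fundamental) and noting that every edge of $NC_G(uv)=N_G(u)\cup N_G(v)$ is incident to $u$ or $v$, which lie in $V(N_G(w))$ and hence outside $C_G(w)$. Your version just spells out the bookkeeping slightly more explicitly.
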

\begin{proof}
The graph $C_G(w)\cap N_G(uv)$ has no edges as $u$ and $v$ are adjacent to $w$ in $G$. As $w$ is fundamental there is some vertex $w'\in V(G)\setminus \{u,v,w\}$ not adjacent to $w$ in $G$. Now $w'$ is isolated in $C_G(v)\cap N_G(uv)$.
\end{proof}

\begin{proposition}
Let $\{u,v\}$ and $\{w\}$ be a pair of fundamental sets of type \ref{9} then $P_{NC_G(uv)}\cap P_{NC_G(w)}$ is not in $M_G$.
\end{proposition}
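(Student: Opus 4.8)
The plan is to exhibit an isolated vertex in the graph $NC_G(uv)\cap NC_G(w)$ and then conclude via Proposition \ref{Prop:Edge}. Since the pair is of type \ref{9}, the edge $uw$ is fundamental, so by Proposition \ref{Prop:FEdge} there is a vertex $x$ that is a common neighbor of $u$ and $w$ in $G$. This $x$ is my candidate for the isolated vertex.

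First I would pin down the position of $x$. Because $x\sim u$ and $x\sim w$ in $G$, we have $x\notin\{u,w\}$, and because $uv\in E(\overline G)$ forces $u\not\sim v$ in $G$, we have $x\neq v$; so $x$ is one of the vertices outside $\{u,v,w\}$. Since $uv$ is fundamental, Proposition \ref{Prop:FEdge} also gives $NC_G(uv)=N_G(u)\cup N_G(v)$, while $NC_G(w)=N_G(w)\cup C_G(w)$ by definition; the graphs $N_G(u)$, $N_G(v)$, $N_G(w)$ are stars, and the vertex set of $NC_G(uv)\cap NC_G(w)$ is all of $V(G)$, so $x$ is genuinely a vertex of the intersection.

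The core of the argument is then to list the edges of $NC_G(uv)\cap NC_G(w)$ incident to $x$. In $NC_G(uv)=N_G(u)\cup N_G(v)$ every edge at $x$ lies in one of the two stars, hence equals $xu$ or $xv$. In $NC_G(w)=N_G(w)\cup C_G(w)$, the vertex $x$ is a neighbor of $w$, so $x\in V(N_G(w))$ and therefore $x\notin V(C_G(w))$; thus the only edge of $NC_G(w)$ at $x$ is $xw$. Since $u$, $v$, $w$ are three distinct vertices, $\{xu,xv\}\cap\{xw\}=\emptyset$, so $x$ has no incident edge in $NC_G(uv)\cap NC_G(w)$, i.e. $x$ is isolated. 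Proposition \ref{Prop:Edge} then says $P_{NC_G(uv)}\cap P_{NC_G(w)}$ is not in $M_G$.

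The only things to watch are the two small observations that $x\neq v$ and that $C_G(w)$ contributes no edge at $x$; both are immediate once one notes that $x$ is adjacent in $G$ to $u$ and to $w$. Unlike the type-\ref{8} case, where one has to manufacture the isolated vertex from non-bipartiteness of $G$, here the isolated vertex is supplied directly by the fundamentality of $uw$, so no appeal to connectivity or bipartiteness is needed, and I do not expect a genuine obstacle.
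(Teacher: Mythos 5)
Your proposal is correct and follows the same route as the paper: fundamentality of $uw$ supplies a common neighbor of $u$ and $w$ in $G$, which is then shown to be isolated in $NC_G(uv)\cap NC_G(w)$. You simply spell out more carefully than the paper why the contributions from $C_G(uv)$ and $C_G(w)$ are harmless.
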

\begin{proof}
As $uw$ is fundamental there is some vertex $w'\in V(G)\setminus \{u,v,w\}$ adjacent to both $u$ and $w$ in $G$. Now $w'$ is isolated in $N_G(uv)\cap N_G(w)$.
\end{proof}

\begin{proposition}
Let $\{u,v\}$ and $\{w\}$ be a pair of fundamental sets of type \ref{10} then $P_{NC_G(uv)}\cap P_{NC_G(w)}$ is not in $M_G$.
\end{proposition}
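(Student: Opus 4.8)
The plan is to exhibit an isolated vertex in the graph $NC_G(uv)\cap NC_G(w)$; by Proposition \ref{Prop:Edge} this shows that $P_{NC_G(uv)}\cap P_{NC_G(w)}$ is not in $M_G$. First I would put the two graphs into a convenient form. Since $uv\in E(\overline{G})$, Proposition \ref{Prop:FEdge} shows that $C_G(uv)$ has no vertices, so $NC_G(uv)=N_G(u)\cup N_G(v)$; because $\overline{G}$ is triangle-free, any vertex other than $u$ or $v$ is adjacent in $G$ to $u$ or to $v$ (otherwise it would form a triangle with $u,v$ in $\overline{G}$), so this graph has vertex set $V(G)$ and its edges are exactly the edges of $G$ meeting $\{u,v\}$. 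By Proposition \ref{Prop:FVertex}, $C_G(w)$ is the clique on the set $W$ of neighbors of $w$ in $\overline{G}$. The hypothesis of type \ref{10} that $w$ is adjacent to $u$ but not to $v$ in $\overline{G}$ gives $u\in W$ and $vw\in E(G)$, so in particular $v\notin W$.

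Next I would compute $NC_G(uv)\cap NC_G(w)$ using its decomposition into the four parts $N_G(uv)\cap N_G(w)$, $N_G(uv)\cap C_G(w)$, $C_G(uv)\cap N_G(w)$, $C_G(uv)\cap C_G(w)$. The last two are empty, as $C_G(uv)$ has no vertices. The part $N_G(uv)\cap N_G(w)$ consists of the edges of $G$ that meet $\{u,v\}$ and contain $w$; since $uw\notin E(G)$ while $vw\in E(G)$, this is just the single edge $vw$. The part $N_G(uv)\cap C_G(w)$ consists of the edges of the clique $C_G(w)$ that meet $\{u,v\}$; as $v\notin W$, these are exactly the edges joining $u$ to the other vertices of $W$. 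Hence every edge of $NC_G(uv)\cap NC_G(w)$ has both endpoints in $\{v,w\}\cup W$.

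It then remains to find a vertex of $V(G)$ outside $\{v,w\}\cup W$: any neighbor of $w$ in $G$ different from $v$ works, since such a vertex is neither $w$, nor $v$, nor a neighbor of $w$ in $\overline{G}$. Since $\{w\}$ is a fundamental set, the degree criterion (Proposition \ref{Prop:FVertex}) gives $w\notin V_1(G)\cup V_2(G)$, so $w$ has at least three neighbors in $G$ and hence at least two that differ from $v$; any one of these is isolated in $NC_G(uv)\cap NC_G(w)$. The only delicate point is the bookkeeping of which of $u,v,w$ sits in the neighborhood part and which in the clique part of the other's $NC$-graph; once that is fixed the rest is routine. (Note that the remaining hypotheses of type \ref{10}, that $uw$ is not fundamental and $u$ is fundamental, are not used in this argument; they only serve to separate type \ref{10} from types \ref{9} and \ref{D}.)
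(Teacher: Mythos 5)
Your reduction of $NC_G(uv)\cap NC_G(w)$ is correct: the four-part decomposition, the identification of the edge set as $\{vw\}\cup\{ux\mid x\in W\setminus\{u\}\}$, and the conclusion that the isolated vertices are exactly the neighbors of $w$ in $G$ other than $v$ all check out. The gap is in the final existence step. The degree criterion of Proposition \ref{Prop:FVertex} constrains the degree of a fundamental vertex in $\overline{G}$, not in $G$: the proof there counts the vertices of the clique $C_G(w)$, and these are precisely the neighbors of $w$ in $\overline{G}$; accordingly the criterion used throughout Section \ref{Sec:All} (and in the identification of the facets of $M_G$ with $S_G\cup V(\overline{G})\setminus(V_1(\overline{G})\cup V_2(\overline{G}))$) is that $w$ is fundamental if and only if $w\notin V_1(\overline{G})\cup V_2(\overline{G})$ --- the occurrence of $V_1(G)\cup V_2(G)$ in the statement of Proposition \ref{Prop:FVertex} is a typo. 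So fundamentality of $w$ gives $|W|\ge 3$ and says nothing directly about $\deg_G(w)$: a vertex of degree $3$ in $\overline{G}$ on six vertices is fundamental yet has only two neighbors in $G$. Your inference ``$w$ has at least three neighbors in $G$'' therefore does not follow, and with it the existence of a vertex outside $\{v,w\}\cup W$ is unjustified.

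Your closing parenthetical is the tell: the hypothesis that $u$ is fundamental cannot be dispensable, since type \ref{D} is the identical configuration except that $u$ is not fundamental, and there (Proposition \ref{prop:d}) the intersection \emph{is} in $M_G$; indeed in type \ref{D} one checks that $v$ is the only $G$-neighbor of $w$, so the vertex you seek genuinely fails to exist. The correct finish, which is the paper's one-line argument, uses exactly that hypothesis: since $u$ is fundamental and already has $v$ and $w$ as neighbors in $\overline{G}$, it has a third $\overline{G}$-neighbor $u'\in V(G)\setminus\{u,v,w\}$, and triangle-freeness of $\overline{G}$ forces $u'v,u'w\in E(G)$; this $u'$ is a $G$-neighbor of $w$ different from $v$, i.e.\ a vertex outside $\{v,w\}\cup W$, and so is isolated in $NC_G(uv)\cap NC_G(w)$. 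With that substitution your computation yields a complete proof.
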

\begin{proof}
By the degree criterion for fundamentality there is a vertex $u'\in V(G)\setminus\{u,v,w\}$ not adjacent to $u$ in $G$. As $\overline{G}$ is triangle-free $u'$ is adjacent to both $v$ and $w$ in $G$. Now $u'$ is isolated in $N_G(uv)\cap N_G(w)$.
\end{proof}

\begin{proposition}\label{prop:d}
Let $\{u,v\}$ and $\{w\}$ be a pair of fundamental sets of type \ref{D} then $P_{NC_G(uv)}\cap P_{NC_G(w)}$ is in $M_G$.
\end{proposition}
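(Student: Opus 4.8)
The plan is to apply Proposition~\ref{Prop:Edge}: since $\{u,v\}$ and $\{w\}$ are fundamental sets in the (connected, not bipartite) graph $G$, the face $P_{NC_G(uv)}\cap P_{NC_G(w)}$ lies in $M_G$ exactly when the graph $NC_G(uv)\cap NC_G(w)$ has no isolated vertex, so it suffices to produce, for each vertex of $G$, an edge of that graph meeting it. As in the earlier propositions of this section, I would begin by reading off from the type~\ref{D} hypotheses that in $G$ the vertex $u$ is adjacent to neither $v$ nor $w$, while $v$ and $w$ are adjacent.

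The main work is to identify $N_G(u)$ and $N_G(w)$. Because $u$ is not fundamental and is adjacent in $\overline G$ to both $v$ and $w$, the degree criterion for fundamentality forces $u$ to be adjacent in $G$ to every vertex of $V(G)\setminus\{u,v,w\}$, so $N_G(u)$ is the star centred at $u$ with leaf set $V(G)\setminus\{u,v,w\}$. Since $\{w\}$ is fundamental, $w$ has at least three neighbours in $\overline G$; none of these is $v$ and one of them is $u$, so $V(G)\setminus\{u,v,w\}$ is nonempty and this star has at least one leaf. The decisive step is the $G$-neighbourhood of $w$: because the edge $uw$ is not fundamental, Proposition~\ref{Prop:FEdge} tells us $u$ and $w$ have no common neighbour in $G$, and combined with the previous sentence this forces $w$ to be adjacent in $G$ to no vertex of $V(G)\setminus\{u,v,w\}$; since $w$ is also not adjacent to $u$ but is adjacent to $v$, the only neighbour of $w$ in $G$ is $v$. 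Hence $N_G(w)$ is the single edge $vw$ and $C_G(w)=G[V(G)\setminus\{v,w\}]$.

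It then remains to assemble the pieces. By Proposition~\ref{Prop:FEdge} the graph $C_G(uv)$ is empty, so $NC_G(uv)=N_G(u)\cup N_G(v)$, whence $N_G(u)\subseteq NC_G(uv)$. On the other side, $V(N_G(u))=V(G)\setminus\{v,w\}=V(C_G(w))$ and every edge of the star $N_G(u)$ is a $G$-edge inside $V(G)\setminus\{v,w\}$, hence an edge of the induced subgraph $C_G(w)$; thus $N_G(u)\subseteq C_G(w)\subseteq NC_G(w)$. Therefore every edge of $N_G(u)$ is an edge of $NC_G(uv)\cap NC_G(w)$, and these edges meet $u$ together with every vertex of $V(G)\setminus\{u,v,w\}$. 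The two remaining vertices $v,w$ are caught by the edge $vw$, which lies in $N_G(v)\subseteq NC_G(uv)$ and in $N_G(w)\subseteq NC_G(w)$. So no vertex of $NC_G(uv)\cap NC_G(w)$ is isolated, and Proposition~\ref{Prop:Edge} yields the claim. (Along the way one sees that in fact $NC_G(uv)\cap NC_G(w)=N_G(u)\cup N_G(w)$.)

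I expect the only genuinely delicate point to be the deduction that $w$ has exactly one neighbour in $G$: that is where both ``not fundamental'' hypotheses of type~\ref{D} must be used simultaneously, and once it is in place the remainder is the same routine bookkeeping with $N_G(\cdot)$ and $C_G(\cdot)$ used throughout this section.
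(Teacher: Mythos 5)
Your proof is correct and follows essentially the same route as the paper: the two ``not fundamental'' hypotheses force $u$ to be $G$-adjacent to all of $V(G)\setminus\{u,v,w\}$ and $w$ to be $G$-adjacent only to $v$, so the star at $u$ together with the edge $vw$ leaves no vertex of $NC_G(uv)\cap NC_G(w)$ isolated. The only (immaterial) difference is that you get nonemptiness of $V(G)\setminus\{u,v,w\}$ from $w$ being fundamental, whereas the paper uses that $G$ is not bipartite.
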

\begin{proof}
All vertices in $V(G)\setminus \{u,v,w\}$ are adjacent to $w$ in $\overline{G}$ as $u$ is not fundamental and $uv$ is fundamental. Now $C_G(w)\cap N_G(uv)$ is a star with central vertex $u$ and leafs $V(G)\setminus \{u,v,w\}$. The set $V(G)\setminus \{u,v,w\}$ is not empty as $G$ is not bipartite.

The edge $uv$ is in $N_G(w)\cap C_G(uv)$ and no vertex is isolated in $NC_G(uv)\cap NC_G(w)$
\end{proof}

The set of facets of $P_G$ in $M_G$ is $\{P_{NC_G(v)}\mid v\in V(\overline{G})\setminus (V_1(\overline{G})\cup V_2(\overline{G}))\}\cup \{P_{NC_G(uv)}\mid uv\in S_G\}$. The identifications $v\leftrightarrow P_{NC_G(v)}$ and $uv\leftrightarrow P_{NC_G(uv)}$ are used in the following lemma and in this way the facets in $M_G$ is viewed as the set $S_G\cup V(\overline{G})\setminus (V_1(\overline{G})\cup V_2(\overline{G}))$.

\begin{lemma}\label{lemma:main}
Let $G$ be a connected and not bipartite graph. Let $\overline{G}$ be triangle-free. There are not codimension three faces in $M_G$. The poset $M_G$ is the dualization of the face poset of $F(\overline{G},S_G)$
\end{lemma}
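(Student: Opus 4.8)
The plan is to verify the claimed poset isomorphism $M_G \cong \bigl(\text{face poset of } F(\overline{G},S_G)\bigr)^{\mathrm{op}}$ dimension-by-dimension, starting from the facet level and descending. The facets of $P_G$ lying in $M_G$ are, by Proposition~\ref{Prop:Facet} together with Proposition~\ref{Prop:FVertex} and Proposition~\ref{Prop:FEdge}, exactly indexed by the set $S_G \cup \bigl(V(\overline{G})\setminus(V_1(\overline{G})\cup V_2(\overline{G}))\bigr)$; under the dualization, a codimension-$k$ face of $M_G$ should correspond to a $(k-1)$-cell of $F(\overline{G},S_G)$, so that facets of $M_G$ (codimension one) match up with the vertices of $F(\overline{G},S_G)$, which is precisely the vertex set description of $F(\overline{G},S_G)$ recalled in Section~\ref{Sec:Graph}. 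First I would fix this bijection on the vertex/facet level as in the paragraph preceding the lemma.

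Next I would treat the codimension-two faces of $M_G$: these are the faces $P_{NC_G(U)} \cap P_{NC_G(U')}$ that lie in $M_G$, and by Proposition~\ref{Prop:Edge} membership is governed by whether $NC_G(U)\cap NC_G(U')$ has an isolated vertex. The eleven combinatorial types enumerate every possible pair $(U,U')$ of distinct fundamental sets, and the preceding eleven propositions (Propositions following the type list, including \ref{prop:a}, \ref{prop:b}, \ref{prop:c}, \ref{prop:d}) decide each case: types \ref{A}, \ref{B}, \ref{C}, \ref{D} give faces in $M_G$, the other seven do not. The task is then to check, one type at a time, that the four surviving cases correspond exactly to the four adjacency rules in the explicit description of $F(\overline{G},S_G)$: type \ref{A} (non-fundamental edge $uv\in E(\overline{G})$, both endpoints of degree $\ge 3$) matches ``two edges of $S_G$ sharing a common endpoint in $V_1\cup V_2$'' after the edge-subdivision/contraction bookkeeping; type \ref{B} matches ``$v$ adjacent to $w$ in $\overline{G}$ realized as the contracted vertex''; wait — more carefully, type \ref{B} is two edges $\{u,v\},\{u,w\}\in S_G$ with $u\in V_1\cup V_2$, which is the $S_G$–$S_G$ adjacency rule; type \ref{C} is the set $\{u,v\}$ with $\{v\}$, i.e. an edge of $S_G$ meeting a vertex $v\in V(\overline{G})\setminus(V_1\cup V_2)$ at an endpoint, which is the ``vertex adjacent to edge of $S_G$ containing it'' rule; and type \ref{D} is the last adjacency rule, ``$u$ a vertex of $V_1\cup V_2$ of the edge $vw\in S_G$ not containing $u$, with $uv\in E(\overline{G})\setminus S_G$'' — I would match symbols against the four bullet-free clauses in the $F(\overline{G},S_G)$ description and confirm equality of the two adjacency relations.

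Then I would prove the ``no codimension-three faces'' assertion. Since $F(\overline{G},S_G)$ is (the realization of) a graph, hence a one-dimensional CW-complex, its face poset has faces only of dimensions $-1,0,1$; under dualization this forces $M_G$ to have faces only in codimensions $0,1,2$, i.e.\ no codimension-three faces. But logically I must establish this directly to license the isomorphism, so I would argue: any codimension-three face of $M_G$ would be $P_{NC_G(U_1)}\cap P_{NC_G(U_2)}\cap P_{NC_G(U_3)}$ with all three pairwise intersections in $M_G$, hence each pair among $U_1,U_2,U_3$ is of type \ref{A}, \ref{B}, \ref{C}, or \ref{D}; I would enumerate the possible combinations of these four types on three fundamental sets and in each configuration exhibit an isolated vertex of $NC_G(U_1)\cap NC_G(U_2)\cap NC_G(U_3)$, using triangle-freeness of $\overline{G}$, connectedness and non-bipartiteness of $G$, and the degree criteria exactly as in the pairwise proofs. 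A cleaner route: the allowed pairs always force the underlying sets to share structure incompatible with a third set — e.g. types \ref{A}, \ref{B} pin down edges of $\overline{G}$ whose endpoints dominate almost all of $V(\overline{G})$, leaving no room for a third fundamental set meeting the intersection without creating an isolated vertex — so I would organize the case analysis around which of the three sets are singletons versus pairs.

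The main obstacle I anticipate is the codimension-three vanishing: the pairwise analysis is already eleven cases, and naively combining types gives on the order of $\binom{4}{1}+\binom{4}{2}\cdot(\text{singleton/pair choices})$ subcases, several of which need a genuinely new isolated-vertex argument rather than a reduction to a pairwise one. The key simplification I would lean on is that whenever two of the three sets form a type \ref{A} or \ref{B} pair, the "degree criterion for fundamentality'' forces one or two vertices to be adjacent in $\overline G$ to all of $V(\overline G)$ minus a bounded set, which severely constrains — and usually contradicts the existence of — a compatible third fundamental set; and whenever all three are singletons, type \ref{A} between each pair is impossible for three vertices at once (a non-edge-to-everything-else condition cannot hold for three mutually non-adjacent... — one checks $V(G)\setminus\{u,v,w\}$ must be simultaneously empty and nonempty). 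Once the vanishing is in hand, the isomorphism is the formal statement that two finite posets built of faces in codimensions $\le 2$ (respectively cells in dimensions $\le 1$), with matching atoms and matching incidences between atoms and coatoms, are dual to one another, which is immediate.
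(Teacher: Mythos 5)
Your facet-level identification and the matching of the four surviving pair types to the four adjacency rules of $F(\overline{G},S_G)$ is exactly the paper's argument (though note a slip: type \ref{A} is two fundamental \emph{vertices} joined by an edge of $E(\overline{G})\setminus S_G$, so it realizes the vertex--vertex adjacency rule, not the ``two edges of $S_G$ sharing an endpoint in $V_1\cup V_2$'' rule, which is type \ref{B}; you half-correct this but leave the type \ref{A} assignment wrong). The genuine gap is in the ``no codimension three faces'' claim, which you correctly identify as the hard point of your plan but then leave as an uncompleted enumeration of triples $(U_1,U_2,U_3)$ with each pair of type \ref{A}, \ref{B}, \ref{C} or \ref{D}, conceding that ``several of which need a genuinely new isolated-vertex argument.'' That enumeration is not carried out, and your proposed shortcuts (domination constraints from types \ref{A} and \ref{B}) are heuristics, not proofs; as written the proposal does not establish the vanishing.

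The paper avoids the enumeration entirely with one structural observation that your proposal misses: in each of the four surviving cases (Propositions \ref{prop:a}, \ref{prop:b}, \ref{prop:c}, \ref{prop:d}) the intersection graph $NC_G(U)\cap NC_G(U')$ is exhibited as a disjoint union of stars, so every codimension-two face of $M_G$ is the edge polytope of a disjoint union of stars. Any proper face of such a polytope omits at least one edge of one of the stars, and omitting any edge of a star isolates the corresponding leaf, so the label of that face fails to be divisible by the leaf's variable; hence no proper face of a codimension-two face of $M_G$ lies in $M_G$, and in particular there is no codimension-three face. I recommend you replace your case analysis with this argument, which also makes the final poset-isomorphism step purely formal, as you note.
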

\begin{proof}
First it is proved that no codimension three face is in $M_G$. Following the proofs of the propositions \ref{prop:a}, \ref{prop:b}, \ref{prop:c} and \ref{prop:d} the codimension two faces in $M_G$ are the edge polytopes of disjoint unions of stars. In particular any further intersections will isolate edges in the underlying graphs and then there can be no codimension three face.

Now $M_G$ is the face poset of some one-dimensional complex with the same vertex set as $F(\overline{G},S_G)$. The final step is to show that the adjacencies agree.

 The adjacencies come from pairs of type \ref{A}, \ref{B}, \ref{C} and \ref{D}. This agrees with the adjacency description of $F(\overline{G},S_G)$. Two vertices in $V(\overline{G})\setminus (V_1(\overline{G})\cup V_2(\overline{G}))$ are adjacent if they are adjacent in $\overline{G}$ but the edge between them is not in $S_G$, this comes from pairs of type \ref{A}. Two edges in $S_G$ are adjacent if they have a common endpoint in $V_1(\overline{G})\cup V_2(\overline{G})$, this comes from pairs of type \ref{B}. Vertices in $V(\overline{G})\setminus (V_1(\overline{G})\cup V_2(\overline{G}))$ are adjacent to the edges in $S_G$ containing them, this comes from pairs of type \ref{C}. Finally a vertex $u$ in $V(\overline{G})\setminus (V(\overline{G})\cup V(\overline{G}))$ is adjacent to an edge $vw$ in $S_G$ not containing $u$ if $uv\in E(\overline{G})\setminus S_G$, this comes from pairs of type \ref{D}.
\end{proof}

The point of Lemma \ref{lemma:main} is that all parts of the hull resolution of $I_G$ now can be understood in terms of graphs homotopy equivalent to $\overline{G}$ or induced subgraphs of $\overline{G}$. For cycles $\overline{G}$ all components of proper subgraphs $G[U]$ are paths and $M_{G[U]}$ has an acyclic matching where first all components are contracted to vertices and then one of the critical vertices can be matched to $P_{G[U]}$ thus giving a resolution as described in the introduction. This argument generalizes to prove Theorem \ref{thm:main}.

\section{Proof of Theorem \ref{thm:main}.}\label{Sec:Main}

First do the only if part.

Let $\overline{G}[U]$ be two cycles. Now the cycles are not triangles as $\overline{G}$ is triangle-free. The graph $G[U]$ is connected and not bipartite and Lemma \ref{lemma:main} applies to $M_{G[U]}$.

Consider ordinary discrete Morse theory for the one-dimensional complex consisting of two disjoint cycles. The best possible resulting complex consists of two vertices with loops. For $M_{G[U]}$ this corresponds to a partial matching where $P_{G[U]}$ is critical and there are two critical facets and they each have a critical face of codimension two attached. Only one of the facets can be matched to $P_{G[U]}$ and the other critical facet remain with the critical codimension two face attached and the resolution is not minimal.

Let $\overline{G}$ be a graph with no induced subgraph isomorphic to the disjoint union of cycles. Let $G[U]$ be an induced subgraph without isolated vertices. Let $G_1\ldots, G_n$ be the components of $G[U]$. Now $M_{G[U]}=\prod_{i\in [n]} M_{G_i}$ and either Lemma \ref{lemma:main} applies to $G_i$ or $G_i$ is bipartite. In the bipartite case there are acyclic matchings of $M_{G_i}$ with at most one critical cell.

If Lemma  \ref{lemma:main} applies to $G_i$ consider again ordinary discrete Morse theory. The optimal acyclic matching contracts all components without cycles to single vertices without loops and if there is a component with cycles it is unique and it is contracted to a vertex with potentially many loops.  For $M_{G_i}$ this translates to a partial matching where $P_{G_i}$ is critical and there is a critical facet for each component and at most one of them has critical codimension two faces attached. If there is component with cycles then the corresponding facet is matched to $P_{G_i}$ otherwise any of the facets can be matched to $P_{G_i}$. 

Now there are matchings for each $M_{G_i}$ and these glue together to a matching of $\prod_{i\in [n]}M_{G_i}$ in the following way.

For every matched pair $\sigma_1,\tau_1\in M_{G_2}$ match  $\sigma_1\times\sigma_2\times\cdots\times \sigma_n$ to $\tau_1\times\sigma_2\times\cdots\times \sigma_n$. Now there might be some critical cell $\phi_1\in M_{G_1}$. Proceed to for every matched pair $\sigma_2,\tau_2\in M_{G_2}$  match $\phi_1\times\sigma_2\times \sigma_3\times\cdots\times \sigma_n$ to $\phi_1\times\tau_2\times \sigma_3\times\cdots\times \sigma_n$ and so on. In the end the critical cells are of the form $\phi_1\times\cdots\times\phi_n$ where each $\phi_i$ is critical in $M_{G_i}$. 

This way of constructing acyclic matchings is standard in discrete Morse theory but an argument for acyclicity is given anyway as the argument is also used to show minimality of the resulting resolution.

Assume that there is a directed path in the Hasse diagram of $\prod_{i\in [n]}M_{G_i}$ where the matched edges are reversed. Suppose one of the edges in the cycle is from $\sigma_1\times \sigma_2\times\cdots\sigma_{n}$ to $\sigma_1\times \sigma_2\times\cdots\sigma_{i-1}\times\tau_i\times \sigma_{i+1}\times\cdots\sigma_{n}$ so that the edge comes from the edge from $\sigma_i$ to $\tau_i$ in the reversed Hasse diagram of $M_{G_i}$. Following the cycle it is possible to get back to $\sigma_1\times \sigma_2\times\cdots\sigma_{n}$ and then the edges from $M_{G_i}$ give a cycle and this is a contradiction.

What remains is to show that the resulting resolution is minimal. In order to do this a slight strengthening of Theorem \ref{thm:adm} is needed including a sufficient condition for minimality. One such condition is that the resulting resolution is minimal if there is no directed path between critical cells with the same label in the graph obtained by reversing the matched edges in the Hasse diagram. This condition is a special case of Lemma 7.5 in \cite{BW}.

It is enough to consider directed paths where all cells have the same label as the matching is homogenous.

Consider the Hasse diagrams of $M_{G_i}$ where the matched edges are reversed. There can not be any directed path between critical cells with the same dimension as the endpoint then have to be matched. Any directed path between critical cells then have to go from a facet to a codimension two face. In particular the path has to pass through $P_{G_i}$ as the critical elements from codimension two faces correspond to edges in a single component of $\overline{G}_i$ and the critical facets correspond to the other components. Now there is no path between critical faces of $M_{G_i}$ as the facet matched to $P_{G_i}$ is a sink. In the same way as acyclicity of the matchings for each $M_{G_i}$ gives acyclicity of the matching of $\prod_{i\in [n]}M_{G_i}$ this argument extends to show that there is no directed path between critical cells in the Hasse diagram of $\prod_{i\in [n]}M_{G_i}$ with matched edges reversed.

\end{document}